\documentclass[12pt]{amsart}  \usepackage{amsmath,amssymb,enumerate,amscd,tikz,url}

\makeatletter
\@namedef{subjclassname@2020}{%
\textup{2020} Mathematics Subject Classification}
\makeatother

\numberwithin{equation}{section}

\newtheorem{theorem}[equation]{Theorem}
\newtheorem{lemma}[equation]{Lemma}
\newtheorem{cor}[equation]{Corollary}
\newtheorem{prop}[equation]{Proposition}

\theoremstyle{definition}

\newtheorem{Ex}[equation]{Example}
\newtheorem{Rem}[equation]{Remark} 
\newtheorem{Not}[equation]{Notation}

\newcommand{\C}{{\mathbb C}}
\newcommand{\F}{{\mathbb F}}

\newcommand{\N}{{\mathbb N}}
\newcommand{\Q}{{\mathbb Q}}
\newcommand{\R}{{\mathbb R}}

\newcommand{\Z}{{\mathbb Z}}

\newcommand{\Mu}{\boldsymbol \mu}

\newcommand{\gp}{{\mathfrak p}}

\newcommand{\cD}{{\mathcal D}}
\newcommand{\cG}{{\mathcal G}}
\newcommand{\cO}{{\mathcal O}}

\newcommand{\Gal}{\operatorname{Gal}}
\newcommand{\GL}{\operatorname{GL}}

\newcommand{\mult}{\operatorname{mult}}
\newcommand{\Tr}{\operatorname{Tr}}

\newcommand{\ord}{\operatorname{ord}}

\newcommand{\Frob}{{\rm Frob}}

\newcommand{\lr}[1]{{\langle {#1} \rangle}}
\newcommand{\vv}[1]{{\left\vert {#1} \right\vert}}
\newcommand{\fpart}[1]{{\left\{ {#1}  \right\}}}
\newcommand{\ov}[1]{\overline{#1}}
\newcommand{\fdeg}[2]{[{#1}\!:\!{#2}]}

\newcommand{\floor}[1]{ {\left\lfloor {#1} \right\rfloor} }

\makeatletter
\renewcommand*\l@subsection{\@tocline{2}{0pt}{30pt}{0pt}{}}
\makeatother

\title{Distinguishing Gauss sums}

\author[M. Adrian]{Moshe Adrian}
\address{Department of Mathematics \\
Queens College, CUNY \\
65-30 Kissena Blvd., Queens, NY 11367-15971}
\email{moshe.adrian@qc.cuny.edu}

\author[J. Diamond]{Jack Diamond}
\address{Department of Mathematics \\
Queens College, CUNY \\
65-30 Kissena Blvd., Queens, NY 11367-15971}
\email{jack.diamond@qc.cuny.edu}

\author[K. Kramer]{Kenneth Kramer}
\address{Department of Mathematics \\
Queens College, CUNY \\
65-30 Kissena Blvd., Queens, NY 11367-15971}
\email{kkramer@qc.cuny.edu}

\author[G. K.-F. Tam]{Geo Kam-Fai Tam}
\address{Mathematics Department  \\ Xiamen University Malaysia \\ Jalan Sunsuria, Bandar Sunsuria, 43900
Sepang, Selangor, Malaysia.}
\email{kamfai.tam@xmu.edu.my}

\date{22 Sept 2025:  revision started.}

\begin{document}

\baselineskip=17pt

\begin{abstract}
Let $\F_q$ be the field of order $q = p^f$ for prime $p$. If $G(\chi)$ is the Gauss sum attached to a multiplicative character $\chi$ on $\F_q^\times$, then $G(\chi) = G(\chi^p)$.  We investigate the converse question:  when does the equality of Gauss sums $G(\chi_2) = G(\chi_1)$ imply that $\chi_2 = \chi_1^{p^j}$ for some integer $j$.  If $\chi_2 = \chi_1^{p^j}$, we say that $\chi_1$ and $\chi_2$ are Frobenius-conjugate.

We use the Stickelberger factorization of ideals in cyclotomic fields to give an easily testable criterion for equality of Gauss sums, based on $p$-adic digit expansion.  As an application, we develop several conditions under which Gauss sum equalities between characters on $\F_q^\times$ are explained by Frobenius-conjugacy.  For example, if $\chi_1$ has order $q-1$ or $\frac{1}{2}(q-1)$ and $G(\chi_2) = G(\chi_1)$, then $\chi_1$ and $\chi_2$ are Frobenius-conjugate.

We also include several examples, based on the explicit evaluation of certain {\em pure} Gauss sums by R.J. Evans.
\end{abstract}

\maketitle
\tableofcontents

\subjclass[2020]{Primary 11T24; Secondary 11S80}

\keywords{}

\section{Introduction}

Throughout this paper, $p$ is a prime, $q = p^f$ and $\mathbb{F}_q$ is the field with $q$ elements.   Let $\Mu_n$ denote the group of $n$-th roots of unity and choose a generator $\zeta_p$ for $\Mu_p$. Fix the additive character $\psi\!: \F_q \to \Mu_p$, defined by $\psi(a) = \zeta_p^{\Tr(a)}$, where $\Tr\!: \, \F_q \to \F_p$ is the trace.  The {\em Gauss sum} of a multiplicative character  $\chi\!: \, \F_q^\times \to \Mu_{q-1}$ is defined as
$$
G(\chi) = \sum_{a \in \F_q^\times} \, \chi(a) \, \psi(a) \quad \text{in} \quad \Z[\Mu_{q-1},\Mu_p],
$$
suppressing the dependence on the choice of additive character.  With this normalization, we have $G(\bf 1) = -1$ for the trivial character $\chi = {\bf 1}$.  

The elements $\sigma$ of $\Gal(\F_q /\F_p)$ are generated by the Frobenius $x \mapsto x^p$ and they act on $\chi$ by $\chi^{\sigma}(a) = \chi(\sigma(a))$.  We say that $\chi$ and $\chi^{\sigma}$ are {\em Frobenius-conjugate}.  It is easy to see that $G(\chi^\sigma) = G(\chi)$.   We investigate the converse question:  when does the equality of Gauss sums $G(\chi_2) = G(\chi_1)$ imply that $\chi_1$ and $\chi_2$ are Frobenius conjugate?  By a {\em non-trivial} equality of Gauss sums, we mean one for which the characters are not Frobenius-conjugate.

The Gauss sum $G(\chi)$ is an invariant of the character $\chi$.  Our converse question was motivated by similar and related questions in the representation theory of $p$-adic groups, also converse questions.  Attached to a representation $\pi$ of $\GL(n,F)$, where $F$ is a local non-Archimedean field, are certain invariants called \emph{gamma factors}.  One may ask whether these invariants determine $\pi$.  Indeed, Gauss sums are a key ingredient in the definition of the gamma factor, so the results we prove here  have implications for the representation theory of $\GL(n,F)$.

In \S\ref{Criterion}, we recall basic Galois-theoretic and $p$-adic information about Gauss sums.  The result in Proposition \ref{EqualG} gives an easily testable criterion for Gauss sum equality.  In \S\ref{DistinctionResults}, we prove several criteria under which only trivial Gauss sum equalities hold.   For a special case treated in Proposition \ref{Twist}, we show that if the equality $G(\chi \otimes \widehat{\eta}) = G(\chi' \otimes \widehat{\eta})$ holds for all the twists by characters $\eta$ on $\F_p^\times$, then $\chi$ and $\chi'$ are Frobenius-conjugate (see \cite{NZY} for a thorough answer to this question).

In \S \ref{HDGK}, we recall two important formulas for the evaluation of Gauss sums.  The Hasse-Davenport formula applies when a character on $\F_q$ is lifted to a larger finite field.  The Gross-Koblitz formula provides an evaluation of $G(\chi)$ in terms of $p$-adic Gamma functions.  We use the explicit evaluation of certain {\em pure} Gauss sums \cite[Cor. 8]{Ev} to create examples in \S \ref{Explicit}.

\section{Criterion for equality of Gauss sums} \label{Criterion}

This section contains our necessary and sufficient criterion for equality of Gauss sums, proved in Proposition \ref{EqualG} by Galois theory.  For the reader's convenience, we include several basic properties of Gauss sums.   See \cite[\S6.1--6.2, 87--100]{W}, \cite[IV, \S3, 90--97]{L} and \cite[Ch.\! 11]{BEW} for details.  We first establish our Galois theory notation.

Fix a prime $p$ and let $q = p^f$.  Let $F = \Q(\Mu_p)$, $K = \Q(\Mu_{q-1})$ and $L = KF = \Q(\Mu_{p(q-1)})$.  Denote the ring of integers of $L$ by  $\cO_L$, choose a prime $\gp$ over $p$ in $L$ and identify the residue field $\cO_L/\gp$ with $\F_q$.  Let $\omega\!: \, \F_q^\times \xrightarrow{\sim} \Mu_{q-1}$ be the {\em Teichm\"uller character} at $\gp$, such that $\omega(a)$ is the unique root of unity in $\Mu_{q-1}$ congruent to $a$ modulo $\gp$.  Then $\omega$ generates the group of multiplicative characters of $\F_q^\times$.  

We have the following diagram of fields, with additional notation defined subsequently:  

\vspace{2  pt}

\begin{center}
\begin{tikzpicture}[scale=1, thin, baseline=(current  bounding  box.center)]	
       \draw (0,0) node {$\bullet$};  \draw (1,.4) node {$\bullet$}; \draw (.-.4,1) node {$\bullet$}; \draw (.6,1.4) node {$\bullet$};   \draw (.2,-.5) node {$\bullet$};  \draw (1.2,-.1) node {$\bullet$}; 
       \draw (0,0) -- (1,.4) -- (.6,1.4) -- (-.4,1) -- (0,0);   \draw (1,.4) -- (1.2,-.1) -- (.2,-.5)--(0,0); 
       \draw (-0.4,0) node {$K_0$};   \draw (-1.75,1.05) node {$K = \Q(\Mu_{q-1})$};  
       \draw (-.1,-.6) node {$\Q$};   \draw (2.3,.-.2) node {$F = \Q(\Mu_p)$}; 
       \draw (1.8,.4) node {$K_0(\Mu_p)$};  \draw (2.7,1.45) node {$L = KF = \Q(\Mu_{p(q-1)})$};
       \draw (1.2,.95) node {$\lr{\Phi}$};  \draw (0,1.55) node {$\lr{\gamma}$};
\end{tikzpicture}
$\begin{array}{l} 
\\
\Phi = \Frob_\gp, \hspace{3 pt} \Phi(\zeta_{q-1}) = \zeta_{q-1}^p,  \vspace{5 pt} \\ \gamma(\zeta_p) = \zeta_p^\alpha,   \hspace{3 pt} \lr{\alpha} = (\Z/p\Z)^\times.  \end{array}$
\end{center}
The group $\Gal(F/\Q)$ is cyclic of order $p-1$ and $\Gal(K/\Q) \simeq (\Z/(q-1)\Z)^\times$ is abelian of order $\phi(q-1)$, where $\phi$ is Euler's totient-function.   Moreover 
$$
\Gal(L/\Q) \simeq \Gal(K/\Q) \times \Gal(F/\Q).
$$  
Since the extension $L/\Q$ is abelian, the decompositon group $\cD_\gp(L/\Q)$ at $\gp$ does not depend on the choice of $\gp$ over $p$.  Let $\Phi$ be a Frobenius in $\cD_\gp(L/F)$, so that it acts trivially on $\Mu_p$.  Then $\Phi$ has order $f$ and its action on $K$ is determined by $\Phi(\zeta_{q-1}) = \zeta_{q-1}^p$.  If $K_0$ denotes the subfield of $K$ fixed by $\Phi$, then $\fdeg{K_0}{\Q} = \phi(q-1)/f$.  

Let the integer $\alpha$ be a primitive root mod $p$, i.e., $\alpha \bmod{p}$ generates $\F_p^\times$, and let $\gamma$ be the generator of $\Gal(L/K)$ characterized by $\gamma(\zeta_p) = \zeta_p^\alpha$.   Then the decomposition group is given by $\cD_\gp(L/\Q) = \lr{\Phi,\gamma}$ and $\Gal(L/K) = \lr{\gamma}$ is its inertia subgroup.  The action of $\cD_\gp(L/\Q)$ on Gauss sums is determined by 
\begin{equation}  \label{DpOnGaussSum}
\Phi(G(\chi)) = G(\chi) \quad \text{and} \quad \gamma(G(\chi)) =  \chi(\alpha^{-1}) \, G(\chi).
\end{equation}
In addition, Gauss sums admit an action of 
$$
\Gal(L/F) \simeq \Gal(K/\Q) \simeq (\Z/(q-1)\Z)^\times.
$$
Every element of this Galois group has the form $\sigma_u$, where $\sigma_u(\zeta_p) = \zeta_p$ and $\sigma_u(\zeta_{q-1}) = \zeta_{q-1}^u$, with $u \in (\Z/(q-1)\Z)^\times$.  Then we have $\sigma_u(G(\chi)) = G(\chi^u)$.

\begin{Not} \label{DigitSum}
Let $q = p^f$ and let $\ov{a}$ be a congruence class in $\Z/(q-1)\Z$.  Let $a$ be the representative for $\ov{a}$ satisfying $0 \le a \le q-2$ and write the $p$-adic expansion
\begin{equation} \label{pAdicDigits}
a = a_0 + a_1p + a_2 p^2 + \dots + a_{f-1}p^{f-1}  \,\text{ with } \, 0 \le a_j \le p-1 \text{ for all } j.
\end{equation}
The coefficients $a_j$ will be called the {\em $p$-adic digits} of $\ov{a}$ and $s_q(\ov{a}) = \sum_{j=0}^{f-1} a_j$ will be called the {\em $p$-adic digit sum}.  Define 
\begin{equation} \label{NuDef}
\begin{array}{l l l}
\nu_q(\ov{a}) = (a_0!) \, (a_1!) \cdots (a_{f-1}!) &\quad \text{if } p \text{ is odd and}  \vspace{2 pt}\\
\beta_q(\ov{a}) =  a_0 a_1 + a_1 a_2 + \dots + a_{f-1} a_0 &\quad \text{if } p=2.
\end{array}
\end{equation}
 If $c$ is in $\Z_p$, we write $s_q(c)$ for $s_q(\ov{c})$, $\nu_q(c)$ for $\nu_q(\ov{c})$ and $\beta_q(c)$ for $\beta_q(\ov{c})$ when there is no risk of ambiguity. 
\end{Not}

\vspace{5 pt}

Next, we collect some basic properties of the $p$-adic digits.  If $x$ is in $\R$, its floor $\floor{x}$ is the greatest integer less than or equal to $x$ and $\fpart{x} = x-\floor{x}$ denotes the fractional part of $x$.  Thus, the coset of $x$ in $\R/\Z$ is represented by $\fpart{x}$, with $0 \le \fpart{x} < 1$. 

\begin{lemma}  \label{cs}
Let $a = a_0 + a_1p + \dots + a_{f-1}p^{f-1}$ with $0 \le a_j \le p-1$, not all equal to $p-1$.
\begin{enumerate}[\hspace{2 pt}\rm i)]
\item The Frobenius automorphism $\Phi = \Frob_\gp$ acts on $\chi = \omega^a$ by $\Phi(\chi) = \omega^{\tilde{a}}$, where 
$
\tilde{a} = a_{f-1}+a_0p + a_1p^2 + \dots + a_{f-2}p^{f-1}.
$

\vspace{4 pt}

\item We have $s_q(-a) + s_q(a) = (p-1)f$ \, and \, $s_q(a) \equiv a \bmod{(p-1)}$.  \vspace{4 pt}
\item If $\fpart{x} = \text{fractional part of } x$, then $\displaystyle{\frac{s_q(a)}{p-1} =  \sum_{i=0}^{f-1} \left\{\frac{p^i a}{p^f-1}\right\}}$.
\end{enumerate}
\end{lemma}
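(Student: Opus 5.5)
Part i) is a computation. By the diagram, $\Phi$ acts on $\Mu_{q-1}$ by $\zeta \mapsto \zeta^p$, so $\Phi(\chi) = \Phi \circ \omega^a = \omega^{pa}$. It therefore suffices to reduce $pa$ modulo $q-1 = p^f - 1$. Multiplying the expansion of $a$ by $p$ gives
\[
pa = a_0 p + a_1 p^2 + \dots + a_{f-2}p^{f-1} + a_{f-1}p^f .
\]
Since $p^f \equiv 1 \bmod (q-1)$, this is congruent to $\tilde{a}$. Moreover $\tilde{a}$ has digits in $[0,p-1]$, not all equal to $p-1$, so $0 \le \tilde a \le q-2$ and $\tilde a$ is the canonical representative. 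Hence its digits are the cyclic shift of those of $a$, which is what we use later.

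For ii), the representative of $-a$ in $[0,q-2]$ is $(q-1)-a$. Because $q-1 = \sum_j (p-1)p^j$, we get
\[
(q-1)-a = \sum_{j}(p-1-a_j)p^j .
\]
All digits $p-1-a_j$ lie in $[0,p-1]$ and are not all equal to $p-1$ unless $a=0$. That case needs a separate check: then $-a \equiv 0$ has representative $0$, with digit sum $0$, whereas $q-1$ has digit sum $(p-1)f$. So the identity as written requires $a \neq 0$ (or the convention that $s_q(0)$ is read via the representative $q-1$). I will record this edge case, presumably excluded or understood in the intended statement. For $a \ne 0$, summing the digits of $(q-1)-a$ gives $s_q(-a) = (p-1)f - s_q(a)$. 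The congruence $s_q(a)\equiv a \bmod (p-1)$ follows from $p \equiv 1 \bmod (p-1)$. It is well defined on classes mod $q-1$ because $(p-1)\mid(q-1)$.

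For iii), the key observation from i) is that for $0 \le a \le q-2$, the fractional part $\fpart{p^i a/(q-1)}$ equals $a^{(i)}/(q-1)$, where $a^{(i)} \in [0,q-2]$ is the representative of $p^i a$, i.e.\ the $i$-th cyclic shift of the digits. Summing over $i=0,\dots,f-1$, each digit $a_j$ appears in every position $p^k$ exactly once. Therefore
\[
\sum_i a^{(i)} = s_q(a)\,(1+p+\dots+p^{f-1}) = s_q(a)\,\frac{q-1}{p-1},
\]
and dividing by $q-1$ gives the formula.

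The only real obstacle is the bookkeeping of representatives: checking that the shifted digit string never becomes all $(p-1)$'s, and handling the boundary case $a=0$ in ii).
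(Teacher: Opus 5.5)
Your proof is correct. The paper states this lemma without proof, as a list of standard digit facts, and your cyclic-shift argument for (iii) is the same identity $\fpart{p^j a/(q-1)} = a^{(j)}/(q-1)$ that the paper itself uses in \eqref{GammaIdent}. Your caveat in (ii) is also right: under the convention $0 \le a \le q-2$ of Notation \ref{DigitSum}, $a=0$ gives $s_q(0)+s_q(0)=0 \neq (p-1)f$, so the first identity requires $a \not\equiv 0 \pmod{q-1}$, consistent with \eqref{ConjChi} holding only for nontrivial characters.
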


See \cite[IV, \S3, Thm.\! 9]{L} for the following Stickelberger factorization of the ideal in $\cO_L$ generated by $G(\chi)$ and a congruence on the unit part of $G(\chi)$.  Recall that we chose a prime ideal $\gp$ over $p$ in $\cO_L$ to define the Teichm\"{u}ller character $\omega$.  There is a unique prime ideal over $p$ in $\cO_F$, generated for example by $\zeta_p - 1$.  Since $\Gal(L/F)$ acts transitively on the prime ideals of $\cO_L$ over the $\cO_F$-ideal $(\zeta_p-1)$, each has the form $\sigma^{-1}_u(\gp)$ for some $u$ in $(\Z/(q-1)\Z)^\times$.  

\begin{lemma}  \label{Stick}
We have $\ord_{\sigma^{-1}_u(\gp)} (G(\omega^{-c})) = s_q(uc)$.  If $p$ is odd, then
$$
\frac{G(\omega^{-c})}{(\zeta_p-1)^{s_q(c)}} \equiv - \frac{1}{\nu_q(c)} \pmod{\gp}.  
$$
\end{lemma}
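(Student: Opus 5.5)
The plan is to prove both statements from the classical Stickelberger congruence, adapted to this paper's normalization $G(\chi) = \sum \chi(a)\psi(a)$ and to Teichm\"uller characters at $\gp$. The valuation statement will follow from the congruence at $u=1$ combined with Galois transport.

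\emph{Step 1: the congruence at $\gp$.} Put $\pi = \zeta_p - 1$. In $L$ the prime $p$ is tamely ramified with index $p-1$, so $\ord_\gp(\pi) = 1$. Following \cite[IV, \S3]{L}, we check the congruence first for $0 \le c \le p-1$ with $c$ a single digit. For such $c$, set $\zeta_p = 1+\pi$. Then $\psi(a) = (1+\pi)^{\Tr(a)}$, and modulo $\pi^{c+1}$ this equals $\sum_{k\le c} \binom{\Tr(a)}{k}\pi^k$, where $\binom{t}{k}$ is read as a polynomial in $t$. Now $\sum_a \omega^{-c}(a)\,\Tr(a)^k$ is a sum over $\F_q^\times$ of a monomial-type expression in $a, a^p, \dots$, reduced mod $\gp$. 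It vanishes unless the exponents match up modulo $q-1$, and this first happens at $k = s_q(c)$. That term equals $(q-1)\cdot \frac{\text{multinomial}}{k!}$, and the multinomial coefficient combines with $1/k!$ to give $1/\prod a_j!$, with $q-1 \equiv -1$. So the plan is to do this multinomial computation directly for general $c$ in one pass. Expand $\Tr(a)^k = (a + a^p + \dots + a^{p^{f-1}})^k$ multinomially and reduce via $\omega(a)\equiv a$. The key point is that $\sum_{a} \omega(a)^{-c} a^{m} \equiv -1$ if $m \equiv c \pmod{q-1}$, and $\equiv 0$ otherwise. The minimal $k$ for which some $m = \sum k_j p^j$ with $\sum k_j = k$ satisfies $m\equiv c$ is $k = s_q(c)$, uniquely attained at $k_j = a_j$; here we use Lemma \ref{cs}(ii)-style digit minimality, since carrying only increases the digit sum. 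This yields
$$G(\omega^{-c}) \equiv -\frac{\pi^{s_q(c)}}{\prod a_j!} \pmod{\gp^{s_q(c)+1}},$$
where we need $a_j! \not\equiv 0$, which holds because $a_j \le p-1$. Dividing by $\pi^{s_q(c)}$ gives the stated congruence, and also $\ord_\gp G(\omega^{-c}) = s_q(c)$.

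\emph{Step 2: other primes.} Apply $\sigma_u$, which fixes $\zeta_p$ and hence $\pi$. We have $\sigma_u(G(\omega^{-c})) = G(\omega^{-uc})$, so $\ord_{\sigma_u(\gp)}G(\omega^{-uc})\,$ equals $\ord_\gp G(\omega^{-c})$. Replacing $c$ by $uc$ and using $\ord_{\sigma_u^{-1}\gp}(x) = \ord_\gp(\sigma_u x)$ gives $\ord_{\sigma_u^{-1}\gp} G(\omega^{-c}) = \ord_\gp G(\omega^{-uc}) = s_q(uc)$.

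\emph{Main obstacle.} The delicate part is the truncation in Step 1. The binomial expansion must be controlled modulo $\gp^{s_q(c)+1}$: the denominators $k!$ with $k \le s_q(c)$ may exceed $p-1$, so the naive expansion could fail. To handle this I would instead use the standard Dwork-type substitution, writing $\zeta_p = E(\pi')$ with $\pi'^{p-1}=-p$ via the Artin--Hasse exponential, or equivalently factor $\psi(a) = \prod_j \zeta_p^{a^{p^j}}$ and expand each factor only to order $p-1$. Each factor then needs only digits $k_j \le p-1$, which avoids the large factorials. Finally, the change of uniformizer from $\pi'$ to $\zeta_p-1$ introduces a factor $\equiv 1 \bmod \gp$ and does not affect the congruence. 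The case $c\equiv 0$ is trivial, since $G(\mathbf 1)=-1$ and $s_q(0)=0$.
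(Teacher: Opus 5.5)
Your Galois transport in Step 2 is correct. Your carrying argument is also the right combinatorial input: if $n_j\ge 0$ and $\sum_j n_jp^j\equiv c\pmod{q-1}$, then $\sum_j n_j\ge s_q(c)$, with equality only at the digits of $c$. The gap is Step 1 as written. The one-pass mod-$\gp$ computation cannot give a congruence modulo $\gp^{s_q(c)+1}$ once $s_q(c)\ge p$.

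Put $\zeta_p=1+\pi$, and let $t(a)\in\{0,\dots,p-1\}$ be the integer lift of $\Tr(a)$. Then $G(\omega^{-c})=\sum_{k=0}^{p-1}\pi^k x_k$ with $x_k=\sum_a\omega^{-c}(a)\binom{t(a)}{k}\in\Z[\Mu_{q-1}]$. This expansion is exact and stops at $k=p-1$, so for $s_q(c)\ge p$ there is no $\pi^{s_q(c)}$ term at all. Reducing via $\omega(a)\equiv a$ determines $x_k$ only modulo $\gp$. From $x_k\equiv 0\pmod{\gp}$ you get only $\pi^kx_k\in\gp^{k+p-1}$, using that $p$ is unramified in $K$; you need this fact even in the one-digit case and do not mention it. For $k=1$ this falls short of $\gp^{s_q(c)+1}$. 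In fact the term carrying $\pi^{s_q(c)}$ is $\pi^kx_k$ with $k\equiv s_q(c)\pmod{p-1}$ and $\ord_\gp(x_k)=s_q(c)-k>0$, which no mod-$\gp$ bookkeeping can detect. So the obstruction is not merely the factorials $k!$.

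The repair you name is the right idea, but it is misstated and its essential content is missing. Two concrete problems:
\begin{enumerate}
\item For $\pi'^{p-1}=-p$, the Artin--Hasse value $E(\pi')$ is not a root of unity. That normalization belongs to Dwork's $\exp(\pi'(t-t^p))$; with $E$ you need $\gamma$ satisfying $\sum_{i\ge0}\gamma^{p^i}/p^i=0$, $\ord_p\gamma=1/(p-1)$, chosen so that $E(\gamma)=\zeta_p$.
\item The ``equivalent'' factorization $\prod_j\zeta_p^{a^{p^j}}$ has no meaning, since $a^{p^j}\notin\F_p$.
\end{enumerate}
What must be stated is the exact splitting identity $\psi(a)=\prod_{j=0}^{f-1}\theta(\omega(a)^{p^j})$ for $\theta(t)=E(\gamma t)=\sum_n\lambda_nt^n$. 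You also need $\lambda_n=\gamma^n/n!$ for $n<p$, and $\ord_\gp(\lambda_n)\ge n$ for every $n$, which follows from the $p$-integrality of $E$. Exact orthogonality over $\Mu_{q-1}$, with no reduction mod $\gp$, then gives $G(\omega^{-c})=(q-1)\sum\prod_j\lambda_{n_j}$ over tuples with $\sum_j n_jp^j\equiv c$. Your carrying argument, together with $\gamma\equiv\zeta_p-1\pmod{\gp^2}$, then finishes the proof. Discarding the terms with some $n_j\ge p$ is a consequence of this expansion, not a substitute for it.

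For comparison, the paper gives no proof and quotes the classical Stickelberger congruence from Lang. Standard proofs use your binomial expansion only for a single digit, where it is valid, and reach general $c$ through $G(\chi^p)=G(\chi)$ and Jacobi sums. Within the paper one could also read the lemma off the Gross--Koblitz formula \eqref{Gross-Koblitz}, since $\Gamma_p(a^{(j)}/(q-1))\equiv\Gamma_p(-a_j)=1/a_j!\pmod p$ by \eqref{Lip} and \eqref{FuncEq}.
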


\vspace{3 pt}

\begin{Rem}  \label{Nu2}
When $\gp \vert 2$ and $f \ge 3$, we need a stronger congruence:
$$
\frac{G(\omega^{-c})}{(-2)^{s_q(c)}} \equiv (-1)^{\beta_q(c)+1} \pmod{\gp^2},  \vspace{2 pt}
$$
for $s_q$ and $\beta_q$ defined in \eqref{NuDef}.  See Proposition \ref{2AdicCong}, where it is verified with the aid of the Gross-Koblitz formula.  
\end{Rem}

\vspace{5 pt}

It is well-known that if $\chi$ is not the trivial character, then 
\begin{equation}  \label{ConjChi}
G(\chi) G(\chi^{-1}) = \chi(-1) q
\, \text{ and } \,  \vert G(\chi) \vert_v = \sqrt{q} \,  \text{ for all } v \vert \infty.
\end{equation}

\vspace{5 pt}

The following Proposition is useful for distinguishing Gauss sums.  Let $U_{q-1} = (\Z/(q-1)\Z)^\times$.

\begin{prop}  \label{EqualG}
Let $\chi = \omega^{-c}$ and $\chi' = \omega^{-c'}$.  If $p = 2$, assume $f \ge 3$.  The equality of Gauss sums $G(\chi) = G(\chi')$ holds if and only if both of the following conditions are true:  \vspace{4 pt}  

{\rm i)} \, $s_q(uc) = s_q(uc')$ for all $u \in U_{q-1}$ \vspace{5 pt} and \hspace{10 pt} 

{\rm ii)} \, $\begin{cases} \nu_q(c)  \equiv \nu_q(c') \pmod{p} &\text{if } p \text{ is odd}, \\[2 pt]
           \beta_q(c)  \equiv \beta_q(c')  \pmod{2} &\text{if } p = 2 \text{ and } f \ge 3. \end{cases}$ 
\end{prop}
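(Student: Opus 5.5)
Consider the ratio $\epsilon = G(\chi)/G(\chi')$ in $L$. The idea is to show that condition i) makes $\epsilon$ a root of unity in $L$, and that condition ii) then pins it down to $1$.

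\emph{Necessity.} Suppose $G(\chi) = G(\chi')$. Taking valuations at every prime $\sigma_u^{-1}(\gp)$ over $p$ and applying Lemma \ref{Stick} gives i). In particular $s_q(c) = s_q(c')$, so dividing both Gauss sums by $(\zeta_p-1)^{s_q(c)}$ and using the congruence of Lemma \ref{Stick} yields $1/\nu_q(c) \equiv 1/\nu_q(c') \pmod{\gp}$. Both sides are rational integers prime to $p$, since every digit is at most $p-1$. Hence ii) holds for odd $p$. For $p=2$ the same argument with Remark \ref{Nu2} gives $(-1)^{\beta_q(c)} \equiv (-1)^{\beta_q(c')} \pmod{\gp^2}$. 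Since $2 \notin \gp^2$ (the prime $2$ is unramified in $\Q(\Mu_{q-1})$, and $L = K$ when $p=2$), this forces $\beta_q(c) \equiv \beta_q(c') \pmod 2$.

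\emph{Sufficiency.} Assume i) and ii). The trivial character occurs exactly when $c \equiv 0$, that is, when $s_q(c) = 0$, so i) forces both characters to be trivial or both nontrivial. We may therefore take both nontrivial.
\begin{itemize}
\item By i) and Lemma \ref{Stick}, $\epsilon$ has valuation $0$ at every prime of $L$ above $p$.
\item By \eqref{ConjChi}, $G(\chi)G(\chi^{-1}) = \pm q$, so $G(\chi)$ divides a power of $p$ and is a unit away from $p$; the same holds for $G(\chi')$. Hence $\epsilon \in \cO_L^\times$.
\item Again by \eqref{ConjChi}, $|\epsilon|_v = 1$ at every archimedean place. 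By Kronecker's theorem, $\epsilon$ is a root of unity in $L$, so $\epsilon = \pm\zeta$ with $\zeta \in \Mu_{p(q-1)}$, or $\epsilon \in \Mu_{q-1}$ when $p=2$.
\end{itemize}

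It remains to show $\epsilon = 1$, and this is the main step. First we kill the $p$-power part. By \eqref{DpOnGaussSum}, $\gamma(\epsilon) = \chi\chi'^{-1}(\alpha^{-1})\,\epsilon$, so the inertia group acts on $\epsilon$ only through multiplication by elements of $\Mu_{p-1}$. Write $\epsilon = \zeta_p^k \xi$ with $\xi \in \Mu_{q-1}$ (for $p$ odd, absorb $-1$ into $\Mu_{q-1}$). Since $\gamma$ fixes $\xi$ and sends $\zeta_p^k$ to $\zeta_p^{\alpha k}$, we get $\zeta_p^{(\alpha-1)k} \in \Mu_{p-1}$. This forces $\zeta_p^{(\alpha-1)k} = 1$. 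When $p \ge 3$ we have $\alpha \not\equiv 1 \pmod p$, so $k \equiv 0$ and $\epsilon \in \Mu_{q-1}$; this step is vacuous when $p = 2$.

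Now use the congruences. For $p$ odd, Lemma \ref{Stick} together with $s_q(c) = s_q(c')$ gives $\epsilon \equiv \nu_q(c')/\nu_q(c) \equiv 1 \pmod{\gp}$ by ii). Since reduction mod $\gp$ is injective on $\Mu_{q-1}$ (the Teichmüller map), $\epsilon = 1$. For $p=2$, Remark \ref{Nu2} and ii) give $\epsilon \equiv 1 \pmod{\gp^2}$, hence mod $\gp$, and again $\epsilon = 1$ by injectivity on $\Mu_{q-1}$. I expect the delicate points to be this root-of-unity bookkeeping, which separates the $\Mu_p$ and $\Mu_{q-1}$ parts via the inertia action, and the appeal to Remark \ref{Nu2} for $p=2$, where a congruence mod $\gp$ alone cannot see the sign $\pm1$.
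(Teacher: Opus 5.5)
Your argument is correct in substance but organized differently from the paper's. The paper first uses $s_q(c)=s_q(c')$ and Lemma \ref{cs}(ii) to get $c\equiv c'\pmod{p-1}$, hence $\chi(\alpha)=\chi'(\alpha)$. So $\gamma$ fixes the ratio, which therefore lies in $\cO_K^\times$. It then uses $\Phi$-invariance of Gauss sums ($\theta^p=\theta$) to cut the possible roots of unity down to $\Mu_{p-1}$ (or $\pm1$ for $p=2$), and finishes with the congruence.

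You instead stay in $L$. You strip off the $\Mu_p$-component using the $\gamma$-equivariance and $\Mu_p\cap\Mu_{p-1}=\{1\}$, which recovers $\chi(\alpha)=\chi'(\alpha)$ as a by-product rather than from digit sums. You then bypass the Frobenius step altogether, using that reduction mod $\gp$ is injective on all of $\Mu_{q-1}$. The two routes are equally short. Yours avoids Lemma \ref{cs}(ii) and Frobenius invariance, while the paper's final step needs only that the elements of $\Mu_{p-1}$ are distinct mod $p$. You also treat the trivial character explicitly, which the paper leaves implicit.

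There is one slip in the $p=2$ bookkeeping. When $p=2$, the roots of unity in $L=K=\Q(\Mu_{q-1})$ (with $q-1$ odd) form $\Mu_{2(q-1)}=\pm\Mu_{q-1}$, not $\Mu_{q-1}$. Your inertia step is vacuous there, so you only know $\epsilon=\pm\xi$ with $\xi\in\Mu_{q-1}$.

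Reduction mod $\gp$ plus injectivity then gives $\xi=1$, so only $\epsilon=\pm1$, not $\epsilon=1$. To exclude $\epsilon=-1$ you must use $\epsilon\equiv 1\pmod{\gp^2}$ together with $2\notin\gp^2$, which you already noted in the necessity direction. This is exactly the paper's last step for $p=2$. Your closing remark shows you knew the sign requires the $\gp^2$ congruence, but the written deduction ``hence mod $\gp$, and again $\epsilon=1$ by injectivity on $\Mu_{q-1}$'' does not use it and needs this one-line repair.
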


\begin{proof}
If $G(\chi) = G(\chi')$, then (i) and (ii) follow from Lemma \ref{Stick} and Remark \ref{Nu2}.  For the converse, let $\theta = G(\chi')/G(\chi)$ and assume that $s_q(c) = s_q(c')$.  Then Lemma \ref{cs}(ii) implies that $c \equiv c' \pmod{p-1}$.  It follows that if $\alpha$ is a primitive root modulo $p$, then $\chi(\alpha) = \chi'(\alpha)$ and so $\gamma$ fixes $\theta$, by \eqref{DpOnGaussSum}.  Then, by (i) and Lemma \ref{Stick}, we find that $\theta$ is a unit in $\cO_K$.  But $\vert \theta \vert_v = 1$ for all $v \vert \infty$ by \eqref{ConjChi}.  Hence $\theta$ is in the group of roots of unity $\Mu(\cO_K)$ of $\cO_K$.  

Let $p$ be odd, so that $\Mu(\cO_K) = \Mu_{q-1}$.  A Frobenius $\Phi =  \Frob_\gp$ acts on $\Mu_{q-1}$ by raising to the $p$-power and fixes each Gauss sum.  Thus $\theta^p = \Phi(\theta) = \theta$, so $\theta$ is in $\Mu_{p-1}$.  But $\theta \equiv \nu_q(c)/\nu_q(c') \equiv 1 \pmod{p}$ by Lemma \ref{Stick} and assumption (ii).  Since the elements of $\Mu_{p-1}$ are distinct mod $p$, we conclude that  $\theta = 1$.

Assume that $p = 2$, so $\Mu(\cO_K) =  \Mu_{2(q-1)}$.  Since Frobenius acts by squaring on $\Mu_{q-1}$ but fixes $\theta$, we find that $\theta$ is $\pm 1$.  By Remark \ref{Nu2} and assumption (ii), we have $\theta \equiv 1 \pmod{4}$ and so $\theta = 1$.
\end{proof}

\begin{Rem}
Suppose $p=2$.  If $f = 1$, we have only the trivial character {\bf 1}.  If $f = 2$, there are only two equivalence classes of characters modulo the action of Frobenius, namely {\bf 1} and the Teichm\"{u}ller character $\omega$, with $G(\omega) = 2$.  This covers the cases omitted in Proposition \ref{EqualG} when $p=2$.
\end{Rem}

\begin{Rem} \label{f=1}
For completeness, we recall that if $\chi$ and $\chi'$ are distinct characters on $\F_q^\times$ whose orders divide $p-1$, then $G(\chi) \ne G(\chi')$.  Clearly, $\chi$ and $\chi'$ are not primitive as soon as $f \ge 2$.   By \eqref{ConjChi}, we can assume that neither $\chi$ not $\chi'$ is trivial.  Define $n$ by $q-1 = (p-1)n$.  Since $\omega$ has order $q-1$ on $\F_q^\times$, we can write $\chi = \omega^{an}$ and $\chi' = \omega^{bn}$, where $1 \le a,b \le p-2$.  We have the $p$-adic digit expansion
$
an = a(1+p+\dots+ p^{f-1}), 
$
so $s_q(an) = af$ and similarly $s(bn) = bf$.  If $a$ and $b$ are not equal, Proposition \ref{EqualG} shows that $G(\chi) \ne G(\chi')$.
\end{Rem}

\begin{Ex}\label{Ex100} 
Using Proposition \ref{EqualG} for small $f$ and $p$ gives the following sample of fields $\F_q$ such that Gauss sum equalities only occur for Frobenius-conjugate characters.
\begin{enumerate}[i)]
\item Let $f = 3$.  If $p = 2, 3, 5, 7, 13, 17$, then all Gauss sum equalities are trivial.  If $p = 11$ and $\chi = \omega^{95}$ then $G(\chi) = G(\chi^{-1}) = (-11)^{3/2}$ by Theorem \ref{T:Gsum1}, but $\chi$ and $\chi^{-1}$ are not Frobenius-conjugate.  According to \cite{NZY}, it should be possible to distinguish these Gauss sums upon twisting the characters.  In fact, the digit sums already disagree for  $\chi \otimes \widehat{\lambda} = \omega^{228}$, where $\widehat{\lambda}$ is the lift of the twisting character $\lambda$ of order $p-1=10$.

  \vspace{2 pt}

\item Let $f = 5$.  If $p = 2, 3, 5, 7$, then all Gauss sums equalities are trivial. 

 \vspace{2 pt}

\item Even values of $f$ are precluded by Example \ref{Ex400}. 
\end{enumerate}
\end{Ex}

\vspace{5 pt}

For later use, we recall that certain values of the digit sums allow for better control over the field extension generated by the Gauss sum.  See \cite[Lemma 6]{Ev2} and \cite{Ol} for related information.  For odd primes $p$, let $\lambda\!: \, \F_p^\times \to \Mu_2$ be the quadratic character and let $p^* = \pm p \equiv 1 \pmod{4}$.  Since $G(\lambda)^2 = p^*$, define $\sqrt{p^*} = G(\lambda)$ in the quadratic subfield of $\Q(\Mu_{p})$.  

\begin{prop} \label{GoodpDigits}
Let $q = p^f$ for an odd prime $p$.  Let $\chi\!: \, \F_q^\times \to \Mu_M$, be a character of order $M$ and write $\chi = \omega^d$ in terms of the Teichm\"{u}ller character $\omega$.  If the $p$-adic digit sums $s_q(kd) = (p-1)f/2$ for all $k$ prime to $M$, then $G(\chi) = \eta \, \sqrt{p^*}^f$ with $\eta \in \Mu_{p-1} \cap \Mu_2 \, \Mu_M$.  Conversely, if there is a root of unity $\eta$ such that $G(\chi) = \eta \, \sqrt{p^*}^f$, then $s_q(kd) = (p-1)f/2$ for all $k$ prime to $M$.
\end{prop}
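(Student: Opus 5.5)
The plan is to compare $G(\chi)$ with $\sqrt{p^*}^f$ by the same ideal-factorization and unit argument used in Proposition \ref{EqualG}. Both elements lie in $L$.

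\textbf{Valuations of $\sqrt{p^*}^f$.} Since $(p) = (\zeta_p-1)^{p-1}$ in $\cO_F$ and $p$ is unramified in $K$, every prime $\sigma_u^{-1}(\gp)$ of $\cO_L$ over $p$ has ramification index $p-1$. Hence
$$\ord_{\sigma_u^{-1}(\gp)}\bigl(\sqrt{p^*}^{\,f}\bigr) = (p-1)f/2 .$$

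\textbf{Forward direction.} Write $\chi = \omega^d = \omega^{-c}$ with $c=-d$. Lemma \ref{Stick} gives $\ord_{\sigma_u^{-1}(\gp)} G(\chi) = s_q(uc) = s_q(-ud)$. The hypothesis concerns $k$ prime to $M$. Every $u \in U_{q-1}$ reduces to a unit mod $M$, and $s_q(-ud)$ depends only on $-ud \bmod (q-1)$, which is determined by $-u \bmod M$ because $d$ is a multiple of $(q-1)/M$. So all these valuations equal $(p-1)f/2$.

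Set $\theta = G(\chi)/\sqrt{p^*}^f$. By the valuation count above, $\theta$ is a unit at every prime over $p$. It is also a unit away from $p$, since by \eqref{ConjChi} $G(\chi)$ divides a power of $p$; the trivial character is excluded because $(p-1)f/2 \ne 0$. By \eqref{ConjChi}, $|\theta|_v = 1$ at every archimedean place. By Kronecker's theorem, $\theta$ is a root of unity in $L$, so $\theta \in \Mu_{2p(q-1)}$.

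\textbf{Pinning down $\theta$ — the main obstacle.}

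First, $\theta \in \Mu_{p-1}$. Under $\Phi$, both $G(\chi)$ and $\sqrt{p^*}$ are fixed, the latter because it lies in $F$. So $\theta$ is $\Phi$-fixed, and since $\Phi$ acts on $\Mu_{q-1}$ as $x\mapsto x^p$, the $\Mu_{q-1}$-part of $\theta$ lies in $\Mu_{p-1}$. To exclude a nontrivial $p$-power-order component $\zeta_p^j$, use $\gamma$. We have $\gamma(G(\chi)) = \chi(\alpha^{-1})G(\chi)$ with $\chi(\alpha^{-1}) \in \Mu_{p-1}$, and $\gamma(\sqrt{p^*}) = \lambda(\alpha)\sqrt{p^*} = -\sqrt{p^*}$. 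Thus $\gamma(\theta)/\theta \in \Mu_{p-1}$. If $\theta$ had a factor $\zeta_p^j$ with $j \not\equiv 0$, then $\gamma$ would multiply it by $\zeta_p^{j(\alpha-1)}$, which is a nontrivial $p$-th root of unity unless $\alpha \equiv 1$. This is impossible for $p>3$. For $p=3$, I would instead use that $\theta$ is congruent to a unit mod $\gp$, via Lemma \ref{Stick} applied to both Gauss sums, since $\sqrt{p^*}$ is itself a Gauss sum. Elements of $\Mu_p$ are all $\equiv 1$, so the congruence cannot see them; hence the $\gamma$ argument is the right tool. Also, $\gamma$ acts on $\Mu_{p-1}$ trivially, so $\theta\in\Mu_{p-1}$ forces $\gamma(\theta)=\theta$, i.e.\ $\chi(\alpha^{-1}) = (-1)^f$. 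That matches $s_q(c) \equiv c \pmod{p-1}$.

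Second, $\theta \in \Mu_2\Mu_M$. For $u \in U_{q-1}$ with $u \equiv 1 \pmod M$, we have $\sigma_u(G(\chi)) = G(\chi^u) = G(\chi)$ and $\sigma_u$ fixes $\sqrt{p^*}$, so $\sigma_u(\theta)=\theta$. The elements of $\Mu_{q-1}$ fixed by all such $u$ are those in $\Mu_M$ (or $\Mu_2\Mu_M$ when accounting for the sign/$2$-part). Together these give $\eta=\theta \in \Mu_{p-1}\cap\Mu_2\Mu_M$.

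\textbf{Converse.} If $G(\chi)=\eta\sqrt{p^*}^f$, then applying $\sigma_k$ for $k \in U_{q-1}$ (lifting units mod $M$) gives $G(\chi^k) = \sigma_k(\eta)\sqrt{p^*}^f$. Taking $\ord_\gp$ and using Lemma \ref{Stick} yields $s_q(-kd) = (p-1)f/2$ for all such $k$. Replacing $k$ by $-k$ gives the stated condition.
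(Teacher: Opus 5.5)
Your proof is correct and follows essentially the paper's route: Stickelberger valuations make $G(\chi)/\sqrt{p^*}^f$ a unit of absolute value $1$, hence a root of unity, and $\Phi$ and $\gamma$ then pin it down. The paper works in $\Q(\Mu_{pM})$ and gets $\gamma(\eta)=\eta$ directly from $d\equiv s_q(d)=(p-1)f/2 \pmod{p-1}$, whereas you remove the $\Mu_p$-component by coprimality and then restrict to $\Mu_2\Mu_M$ via the $\sigma_u$ with $u\equiv 1 \pmod M$, whose fixed field $\Q(\Mu_M)$ has exactly $\Mu_2\Mu_M$ as its roots of unity. Your hesitation about $p=3$ is unfounded: a primitive root modulo an odd prime is never $\equiv 1$ (for $p=3$, $\alpha=2$), so the $\gamma$ argument covers every odd $p$ and the abandoned mod-$\gp$ detour can be deleted.
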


\begin{proof}
Assume that $s_q(kd) = (p-1)f/2$ for all $k$ prime to $M$.   According to Lemma \ref{Stick}, we have
$$
\ord_{\gp}(G(\chi)) = \frac{p-1}{2}f = f \ord_{\gp}(G(\lambda))
$$
 for all primes $\gp$ over $p$ in $L = \Q(\Mu_{pM})$.  Define $\eta$ by 
 $
 G(\chi) = \eta  \hspace{2 pt} G(\lambda)^f,
 $ 
 so that $\eta$ is a global unit.  By Lemma \ref{cs}(ii), $d \equiv s_q(d) \pmod{p-1}$ and so $d \equiv (p-1)f/2 \pmod{2}$, since $p-1$ is even.

By \eqref{DpOnGaussSum}, if a generator $\gamma$ for $\Gal(L/\Q(\Mu_M))$ satisfies $\gamma(\zeta_p) = \zeta_p^\alpha$, then   
$$
\gamma(G(\chi)) = \chi(\alpha^{-1}) \, G(\chi) = \omega(\alpha^{-d}) \, G(\chi) = (-1)^f G(\chi), 
$$
since $\alpha^{\frac{1}{2}(p-1)} = -1$ in $\F_p$.   Since $\gamma(\sqrt{p^*}) = - \sqrt{p^*}$, we find that $\gamma(\eta) = \eta$ and so $\eta$ is a unit in $\Z[\Mu_M]$.  It follows from \eqref{ConjChi} that $\vv{\eta}_v = 1$ at each archimedean place $v$.  Therefore $\eta$ is a root of unity in $\Z[\Mu_M]$ and so $\eta$ is in $\Mu_2 \, \Mu_M$.  Furthermore, the Frobenius $\Phi$ acts trivially on $G(\chi)$ and on $\sqrt{p^*}$, so also on $\eta$, giving $\eta = \Phi(\eta) = \eta^p$.  Hence $\eta \in \Mu_{p-1} \cap \Mu_2 \, \Mu_M$.  

The converse follows from Lemma \ref{Stick}.
\end{proof}

\section{Distinct Gauss sums} \label{DistinctionResults}
We use the notation of the introduction, with $q = p^f$ and $f \ge 2$.  Thus  $\chi$ and $\chi'$ are multiplicative characters $\F_q^\times \to \Mu_{q-1}$ and $\omega$ is the Teichm\"{u}ller character.   For $c$ in $\Z/(q-1)\Z$, it is convenient to write $G(c)$ instead of $G(\omega^c)$.  Since $G(c)$ is invariant under the action of Frobenius, we have $G(c') = G(c)$ whenever $c'$ is in the orbit $\lr{p}c = \{c, pc, p^2c, \dots\}$ of $c$ under multiplication by $p$ in $\Z/(q-1)\Z$.  As before, we say that an equality of Gauss sums $G(c) = G(c')$ is trivial if the characters $\omega^c$ and $\omega^{c'}$ are Frobenius-conjugate, i.e., if $c'$ is in $\lr{p}c$.   In this section, we obtain some simple consequences of a non-trivial equality of Gauss sums.

Suppose that $M$ divides $q-1$ and let $d = (q-1)/M$.  The distinct characters of order $M$ on $\F_q^\times$ comprise the set
$$
\cO_q(M) = \{ \omega^{u d} \mid u \in U_{q-1} \} =  \{\omega^{\ov{u} d} \mid \ov{u} \in U_M\},
$$
where $u \mapsto \ov{u}$ denotes the natural projection $U_{q-1} \twoheadrightarrow U_M$.  Thus, the group $H =  \Gal(\Q(\Mu_M)/\Q) \simeq U_M$ acts transitively on characters $\chi$ in $\cO_q(M)$ by $\sigma_u(\chi) = \chi^u$.  Let $[\chi]$ denote the Frobenius-conjugacy class of $\chi$ and let 
\begin{equation} \label{Obar}
\ov{\cO}_q(M) = \{\, [\chi] \, : \,  \chi \in \cO_q(M) \}.
\end{equation}
There is bijective correspondence $U_M/\lr{p} \leftrightarrow \ov{\cO}_q(M)$ by $a \lr{p} \leftrightarrow [\chi^a]$.   Let 
$$
\cG_q(M) = \{G(\chi) \mid \chi \in \cO_q(M)\}
$$ 
be the set of all Gauss sums for characters of order $M$.  There also is an induced action of $H/\lr{\Frob_\gp} \simeq U_M/\lr{p}$ on $\cG_q(M)$.  The following observation is an immediate consequence.

\begin{prop} \label{faithful}
All Gauss sum equalities between characters of order $M$ on $\F_q^\times$ are trivial if and only if the action of $U_M/\lr{p}$ on $\cG_q(M)$ is faithful. 
\end{prop}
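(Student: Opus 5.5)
Fix $\chi_0 = \omega^{d}$ with $d = (q-1)/M$. We will identify $\cO_q(M)$ with $U_M$ via $\ov{u} \mapsto \chi_0^{u}$, and $\ov{\cO}_q(M)$ with $U_M/\lr{p}$. The plan is to express both sides of the equivalence in terms of the single map
$$
\Gamma\!: \, U_M/\lr{p} \to \cG_q(M), \qquad a\lr{p} \mapsto G(\chi_0^{a}),
$$
which is well defined because $G(\chi^p) = G(\chi)$. It is surjective by definition of $\cG_q(M)$. It is equivariant for the action of $U_M/\lr{p}$, since $\sigma_u(G(\chi)) = G(\chi^u)$ for $\sigma_u \in \Gal(L/F)$. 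That $\sigma_u$ acts on $G(\chi_0^a)$ only through $\ov{u} \in U_M$ follows from $G(\chi_0^a) \in \Z[\Mu_M,\Mu_p]$. That $\Frob_\gp = \sigma_p$ acts trivially follows from \eqref{DpOnGaussSum}.

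First step: all equalities among Gauss sums of characters of order $M$ are trivial exactly when $\Gamma$ is injective. This is a restatement of the definitions. A nontrivial equality $G(\chi_0^a) = G(\chi_0^b)$ with $b \notin a\lr{p}$ is precisely a failure of injectivity.

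Second step: show that injectivity of $\Gamma$ is equivalent to faithfulness of the action. The set $U_M/\lr{p}$ is a group acting on itself by translation, and $\Gamma$ is an equivariant surjection. Hence $\cG_q(M)$ is a transitive $U_M/\lr{p}$-set isomorphic to $(U_M/\lr{p})/S$, where $S$ is the stabilizer of $G(\chi_0)$. Because the group is abelian, $S$ is also the stabilizer of every point, so the kernel of the action is exactly $S$. Therefore the action is faithful iff $S$ is trivial iff $\Gamma$ is a bijection. Combining this with the first step gives the proposition.

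The only point needing care is that the "action of $H/\lr{\Frob_\gp}$ on $\cG_q(M)$" is a genuine action. In particular, $\Frob_\gp$ must restrict to multiplication by $p$ on $\Mu_M$ and fix each Gauss sum. Both facts come from \eqref{DpOnGaussSum} and the description of $\Phi$. The second step relies on commutativity, which guarantees that all stabilizers coincide. I expect no real obstacle beyond this bookkeeping.
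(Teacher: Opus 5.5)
Your proof is correct and follows the reasoning the paper leaves implicit when it calls this an immediate consequence of the bijection $U_M/\lr{p} \leftrightarrow \ov{\cO}_q(M)$ and the induced action on $\cG_q(M)$. Your key step, that in an abelian group acting transitively every point has the same stabilizer, is the same observation the paper uses in the proof of Proposition \ref{sameorderclass}.
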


The next Lemma easily follows from the transitive action of $H$ on $\cO_q(M)$ and will be used to fix one of the characters in an equality of Gauss sums.  

\begin{lemma} \label{CleanUp}
Suppose that $\chi \in \cO_q(M)$ and $d = (q-1)/M$.  If $G(\chi) = G(\chi')$, then there is some $u \in U_M$ such that $\sigma_u(\chi) = \omega^d$ and $G(\omega^d) = G(\sigma_u(\chi'))$.  Furthermore, $\chi$ and $\chi'$ are Frobenius-conjugate if and only if $\omega^d$ and $\sigma_u(\chi')$ are Frobenius-conjugate.  
\end{lemma}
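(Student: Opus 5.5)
Since $\chi \in \cO_q(M)$, we can write $\chi = \omega^{vd}$ for some $v \in U_{q-1}$; equivalently $\chi = \omega^{\ov{v} d}$ with $\ov{v} \in U_M$. Choose $u \in U_{q-1}$ with $uv \equiv 1 \pmod{q-1}$; its image in $U_M$ is still denoted $u$ (only its class mod $M$ matters for characters of order dividing $M$, and $\chi'$ will be handled using the lift in $U_{q-1}$). Then $\sigma_u(\chi) = \chi^u = \omega^{uvd} = \omega^d$. Apply $\sigma_u \in \Gal(L/F)$ to the equality $G(\chi) = G(\chi')$. Since $\sigma_u(G(\psi)) = G(\psi^u)$ for every character $\psi$, this gives
$$
G(\omega^d) = G(\sigma_u(\chi)) = \sigma_u(G(\chi)) = \sigma_u(G(\chi')) = G(\sigma_u(\chi')).
$$
This proves the first assertion.

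For the second assertion, note that raising to the $u$-th power commutes with Frobenius-conjugation: $(\psi^{p^j})^u = (\psi^u)^{p^j}$. Suppose $\chi' = \chi^{p^j}$. Then $\sigma_u(\chi') = (\omega^d)^{p^j}$, so $\sigma_u(\chi')$ and $\omega^d$ are Frobenius-conjugate. Conversely, suppose $\sigma_u(\chi') = (\omega^d)^{p^j}$. Apply $\sigma_v$, which is the inverse of $\sigma_u$ on characters since $uv \equiv 1 \pmod{q-1}$. This gives $\chi' = \chi'^{uv} = (\omega^{dv})^{p^j} = \chi^{p^j}$.

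There is no real obstacle here. The one point needing care is to work with a lift $u \in U_{q-1}$ rather than just $\ov{u} \in U_M$. The character $\chi'$ need not have order dividing $M$, so $\sigma_u$ must be a genuine element of $\Gal(L/F)$ and $u$ must be invertible modulo $q-1$. The projection $U_{q-1} \twoheadrightarrow U_M$ is surjective, so such a lift exists.
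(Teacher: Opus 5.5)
Your proof is correct and is essentially the argument the paper intends; the paper only remarks that the lemma follows from the transitive action of $H$ on $\cO_q(M)$, and your steps carry this out: choose $\sigma_u$ with $\sigma_u(\chi)=\omega^d$, apply it to the equality via $\sigma_u(G(\psi)) = G(\psi^u)$, and use that $\sigma_u$ is invertible and commutes with raising to $p$-th powers. Lifting $u$ to $U_{q-1}$ is a worthwhile clarification of the paper's ``$u \in U_M$'', since $\chi'$ need not have order dividing $M$ and so $\sigma_u(\chi')$ is only well defined for $u \in U_{q-1}$.
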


\begin{prop} \label{gcd1}
Let $\chi$ and $\chi'$ be characters on $\F_q^\times$, with $\chi$ of order $q-1$.  If $G(\chi) = G(\chi')$, then $\chi$ and $\chi'$ are Frobenius-conjugate.
\end{prop}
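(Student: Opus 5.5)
The plan is to reduce to a normalized character and then read off the answer from the digit-sum condition of Lemma \ref{Stick}. Only the necessity half of Proposition \ref{EqualG} is needed, and that half comes straight from Lemma \ref{Stick}, which holds for every prime $p$ and every $f$.

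\textbf{Normalization.} Since $\chi$ has order $q-1$, we can write $\chi = \omega^{-c}$ with $c \in U_{q-1}$. Let $u = c^{-1} \in U_{q-1}$ and apply $\sigma_u$ to the equality $G(\chi) = G(\chi')$. This gives
$$
G(\omega^{-1}) = \sigma_u(G(\chi)) = \sigma_u(G(\chi')) = G(\chi'^{\,u}).
$$
This is the same move as Lemma \ref{CleanUp}, except that we normalize to $\omega^{-1}$ rather than to $\omega^{d}$. Moreover, $\chi$ and $\chi'$ are Frobenius-conjugate if and only if $\omega^{-1}$ and $\chi'^{\,u}$ are, because $\sigma_u$ commutes with the action of Frobenius. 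So we may assume $c = 1$, that is, $\chi = \omega^{-1}$, and write $\chi' = \omega^{-c'}$ with $c' \in \Z/(q-1)\Z$.

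\textbf{Digit-sum comparison.} Next we compare the $\gp$-adic valuations using Lemma \ref{Stick} with $u = 1$:
$$
s_q(c') = \ord_\gp G(\omega^{-c'}) = \ord_\gp G(\omega^{-1}) = s_q(1) = 1.
$$
A class $\ov{c'}$ has digit sum $1$ only if its representative $c'$ with $0 \le c' \le q-2$ has exactly one nonzero digit, and that digit equals $1$. Note that $c' = 0$ has digit sum $0$, so it is excluded. Hence $c' = p^j$ for some $0 \le j \le f-1$, so $\chi' = \omega^{-p^j} = \chi^{p^j}$, which is Frobenius-conjugate to $\chi$.

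\textbf{Where the difficulty lies.} No step is a real obstacle once we notice that the minimal possible nonzero digit sum pins $c'$ down. The only care needed is in the normalization: we must choose $u$ so that the fixed character becomes $\omega^{-1}$, whose digit sum is $1$. Normalizing instead to $\omega^{1}$ would give digit sum $(p-1)f - 1$ by Lemma \ref{cs}(ii). That case would yield the same conclusion after passing to inverse classes, but less directly. We also do not need the hypothesis $f \ge 3$ when $p = 2$, because condition (ii) of Proposition \ref{EqualG} is never used.
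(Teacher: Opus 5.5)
Your proof is correct and is essentially the paper's argument: normalize $\chi$ via the Galois action $\sigma_u$ (the move of Lemma \ref{CleanUp}), then observe that equality of Gauss sums forces digit sum $1$, which occurs only on the orbit $\lr{p}$. Normalizing to $\omega^{-1}$ and using only the valuation statement of Lemma \ref{Stick} is a small improvement. It avoids the sign bookkeeping via Lemma \ref{cs}(ii) that the paper's normalization to $\omega$ implicitly needs, and it avoids the $f \ge 3$ hypothesis for $p=2$ that comes with citing Proposition \ref{EqualG}.
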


\begin{proof}
By Lemma \ref{CleanUp}, we may assume that $\chi = \omega$.  If $\chi' = \omega^{d'}$, we must show that $d'$ is in $\lr{p}$.  By Proposition \ref{EqualG}, if $G(d') = G(1)$, then $s_q(d') = s_q(1) = 1$.  But $\lr{p}$ is the only orbit whose elements have a $p$-adic digit sum of 1.
\end{proof}

\begin{Ex}\label{Mersenne}
Let $\chi$ and $\chi'$ be character on $\F_q^\times$ with $\chi$ of {\em prime} order $q-1$, i.e., $p=2$ and $q-1$ is a Mersenne prime.  If $G(\chi) = G(\chi')$, then $\chi$ and $\chi'$ are Frobenius-conjugate.
\end{Ex}

For odd $p$, the mod $p$ congruence of Lemma \ref{Stick} on the unit part of the Gauss sum provides another method for distinguishing Gauss sums.   See \eqref{NuDef} for the function $\nu_q$.

\begin{prop} \label{gcdd}
Assume that $p > \min(f^d,d!) > 1$.  Let $\chi$ and $\chi'$ be characters on $\F_q^\times$, with $\chi$ of order $(q-1)/d$.  If $G(\chi) = G(\chi')$, then $\chi$ and $\chi'$ are Frobenius-conjugate.  
\end{prop}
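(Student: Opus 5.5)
By Lemma \ref{CleanUp}, applied with $\sigma_{-1}$ composed with a suitable $\sigma_u$ (the set $\cO_q(M)$ is stable under $\chi\mapsto\chi^{-1}$), I may assume $\chi=\omega^{-d}$ and $\chi'=\omega^{-d'}$. This matches the normalization $\chi=\omega^{-c}$ of Proposition \ref{EqualG} with $c=d$, $c'=d'$, and it suffices to show $d'\in\lr{p}d$.

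First I check that $d<p$. Since $\min(f^d,d!)>1$ we have $d\ge 2$. If $p>d!$ then $p>d$. If $p>f^d$ then $p>2^d>d$, because $f\ge2$. Hence $d$ has a single nonzero $p$-adic digit, so
$$s_q(d)=d \quad\text{and}\quad \nu_q(d)=d!.$$
Note also that $d!\not\equiv 0\pmod p$. Now suppose $G(\chi)=G(\chi')$. Condition (i) of Proposition \ref{EqualG} with $u=1$ gives $s_q(d')=d$. So the digits $d'_0,\dots,d'_{f-1}$ of $d'$ form a composition of $d$ into $f$ nonnegative parts. Condition (ii) gives $\nu_q(d')\equiv d!\pmod p$, that is, $\prod_j d'_j!\equiv d!\pmod p$.

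The key step is to consider the multinomial coefficient
$$m=\frac{d!}{d'_0!\,d'_1!\cdots d'_{f-1}!}.$$
It is a positive integer, and it is invertible mod $p$ because $d<p$. The congruence above says exactly $m\equiv1\pmod p$. On the other hand, $m\le d!$ trivially, and $m\le\sum m=f^d$ by the multinomial theorem, where the sum runs over all compositions of $d$ into $f$ parts. Hence $1\le m\le\min(f^d,d!)<p$, and together with $m\equiv1\pmod p$ this forces $m=1$. A multinomial coefficient equals $1$ only when all of $d$ sits in a single part. So $d'=d\,p^j$ for some $j$, which means $d'\in\lr{p}d$. Hence $\chi'$ is Frobenius-conjugate to $\chi$, and by Lemma \ref{CleanUp} this conclusion passes back to the original characters.

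The only delicate points are the sign normalization in Lemma \ref{CleanUp} and the observation that $d<p$ makes $d!$ a unit mod $p$. The bound $m\le\min(f^d,d!)$ is where the hypothesis is used, and it is elementary.
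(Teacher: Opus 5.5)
Your proposal is correct and follows essentially the same route as the paper's proof: reduce via Lemma~\ref{CleanUp}, extract $s_q(d')=d$ and $\nu_q(d')\equiv d!\pmod p$ from Proposition~\ref{EqualG}, and observe that the multinomial coefficient $d!/\nu_q(d')$ is $\equiv 1 \pmod p$ yet lies in $[1,p)$, so it equals $1$. You are more careful than the paper in matching the $\omega^{-c}$ normalization of Proposition~\ref{EqualG}, verifying $d<p$, and justifying $m\le f^d$. The only step you leave implicit is $p\ge 3$, which the $\nu_q$ form of condition (ii) requires and which follows at once from $p>\min(f^d,d!)\ge 2$.
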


\begin{proof}
By Lemma \ref{CleanUp}, we may assume that $\chi = \omega^d$.  Let $\chi' = \omega^{d'}\!,$ where $d'$ has the $p$-adic digit expansion $d' = a_0 + a_1 p + \dots + a_{f-1}p^{f-1}$.  The multinomial coefficient 
\begin{equation} \label{UnitRatio}
C = \binom{d}{a_0, \dots, a_{f-1}} = \frac{d!}{\nu_q(d')}  \vspace{2 pt}
\end{equation}
is bounded by $\min(f^d,d!) < p$.   By Proposition \ref{EqualG}, if $G(d') = G(d)$, we have $s_q(d') = s_q(d) = d$ and $\nu_q(d') \equiv \nu_q(d) \equiv d! \pmod{p}$, since $p \ge 3$.  Then $C \equiv 1 \pmod{p}$ and thus $C = 1$.   It follows that $d' = dp^j$ for some $j$, so $\chi = \omega^d$ and $\chi' = \omega^{d'}$ are Frobenius-conjugate.   
\end{proof}

\begin{cor} \label{gcd2}
If $G(\chi) = G(\chi')$ with $p$ odd and $\chi$ of order $(q-1)/2$, then $\chi$ and $\chi'$ are Frobenius-conjugate.  
\end{cor}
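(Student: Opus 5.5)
This is the case $d=2$ of Proposition \ref{gcdd}, so the plan is to check that its hypothesis holds and then apply it. For $d=2$ we have $\min(f^2, 2!) \le 2$, and it remains to confirm the two strict inequalities in $p > \min(f^d,d!) > 1$.

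\emph{Lower inequality.} Since the paper assumes $f \ge 2$ throughout this section, $f^2 \ge 4$. Hence $\min(f^2,2) = 2 > 1$.

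\emph{Upper inequality.} Since $p$ is odd, $p \ge 3 > 2$.

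Both inequalities hold, so Proposition \ref{gcdd} applies directly. For a character $\chi$ of order $(q-1)/2$, the equality $G(\chi)=G(\chi')$ then forces $\chi$ and $\chi'$ to be Frobenius-conjugate.

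For robustness, here is the underlying argument of Proposition \ref{gcdd} specialized to $d=2$:
\begin{enumerate}[\hspace{2 pt}1)]
\item Using Lemma \ref{CleanUp}, reduce to the case $\chi=\omega^2$.
\item By Proposition \ref{EqualG}, if $\chi'=\omega^{d'}$ then $s_q(d')=2$.
\item Also by Proposition \ref{EqualG}, $\nu_q(d')\equiv 2 \pmod p$.
\end{enumerate}
A digit sum of $2$ means the digits of $d'$ are either a single $2$, or two $1$'s in distinct positions. In the second case $\nu_q(d')=1$, and $1\not\equiv 2 \pmod p$ because $p\ge3$. So $d'=2p^j$ for some $j$, which is exactly Frobenius-conjugacy to $\omega^2$.

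There is no real obstacle here. The only point needing care is $f\ge 2$: if $f=1$, then $\min(f^2,2)=1$ and the hypothesis of Proposition \ref{gcdd} fails. That case is covered separately by Remark \ref{f=1}, where distinct characters of order dividing $p-1$ always have distinct Gauss sums.
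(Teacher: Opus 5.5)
Your proposal is correct and follows the paper's route: the corollary is the case $d=2$ of Proposition \ref{gcdd}, whose hypothesis $p > \min(f^2,2!) = 2 > 1$ holds because $p$ is odd and $f \ge 2$ throughout that section. Your unwound argument is also fine; the only nitpick, which the paper shares, is that Proposition \ref{EqualG} (stated for $\omega^{-c}$) literally compares $\nu_q(-2)$ with $\nu_q(-d')$, so you should first apply $\sigma_{-1}$ to $G(\omega^2)=G(\omega^{d'})$ to obtain $\nu_q(d') \equiv \nu_q(2) \pmod p$.
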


\begin{Ex} \label{3^n}
To extend Example \ref{Mersenne}, let $\chi$ and $\chi'$ be characters on $\F_q^\times$, where $q = p^f$ and $p = 3$ or 5, with the strong assumption that $\ell = \frac{q-1}{p-1}$ is prime.  For example, for $p = 3$, we have $(f,\ell) =  (5, 1093)$ or $(13,797161)$ and for $p = 5$, we have $(f,\ell) = (3,31)$ or $(7,19531)$.  Suppose that $G(\chi) =  G(\chi')$. Then, we can show that $\chi$ and $\chi'$ are Frobenius-conjugate.  Suppose not and consider the cases:

\vspace{2  pt}

$p=3$:   By Proposition \ref{gcd1} and Corollary $\ref{gcd2}$, neither character has order $q-1 = 2\ell$ nor order $(q-1)/2 = \ell$.  Hence both are quadratic characters and there is only one such.

\vspace{2  pt}

$p=5$:  Neither character has order $q-1 = 4 \ell$ nor $(q-1)/2 = 2\ell$, as above.  Also, if the order of both characters divides 4, we contradict Remark \ref{f=1}.  Thus, at least one of the characters, say $\chi$, has order $\ell$ and we can assume that $\chi = \omega^4$ by Lemma \ref{CleanUp}.  If $\chi' = \omega^{d'}$, then $s_q(d') = s_q(4) =  4$ by Proposition \ref{EqualG}.  Furthermore, the $5$-adic digit expansion of $d'$ gives a partition of $4$ such that 
$$
\nu_q(d') = \nu_q(4) = 4! \equiv  -1 \pmod{5}.
$$
We have
$$
\begin{array} { | c || c | c | c | c | c |}
\hline
\text{partition} & [4] & [3,1] & [2,2] & [2,1,1] & [1,1,1,1] \\
\hline
\nu_q & -1 & 1 & -1 & 2 & 1 \\
\hline
\end{array}
$$
Thus, the equality $G(\chi) = G(\chi')$ requires that $d' = 2+2\cdot 5^j$ for some $j$.  But then $s_q(\chi^2) = s_q(8) = 4$, while $s_q((\chi')^2) = s_q(4+4\cdot 5^j) = 8$, so not all requisite digit sums match.  Hence there are no non-trivial Gauss sum equalities for characters on $\F_q^\times$.    
\end{Ex}

\vspace{5 pt}

The question of distinguishing Gauss sums via {\em twisting} by a character $\eta$ on $\F_p^\times$ is of interest in representation theory.  See \cite{NZY} for conjectures and some general results, especially when $f < \frac{p-1}{2\sqrt{p}} +1$, obtained by cohomological methods.  The following application of our more elementary arguments gives a special case with no upper bound on $f$.  Note, however, that $f$ must be prime, so that this result verifies a special case of \cite[Conjecture 2.6]{NZY}.  

Recall that the twist of a character $\chi$ on $\F_q^\times$ by $\eta$ is $\chi \otimes \widehat{\eta} = \chi \, \widehat{\eta}$, where $\widehat{\eta}  = \eta \circ N$ is the lift of $\eta$ to $\F_q^\times$ via the norm $N\!: \, \F_q^{\times} \to \F_p^{\times}$.  Equivalently, $\widehat{\eta}$ has the form $\widehat{\eta} = \omega^{t (q-1)/(p-1)}$.

\begin{prop} \label{Twist}
Assume that $n  = \frac{q-1}{p-1}$ is prime and let $\chi$ and $\chi'$ be characters on $\F_q^\times$.  If $G(\chi \, \widehat{\eta}) = G(\chi' \,  \widehat{\eta})$ for all characters $\eta$ on $\F_p^\times$, then $\chi$ and $\chi'$ are Frobenius-conjugate.
\end{prop}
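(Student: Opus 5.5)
The plan is to write $\chi = \omega^{c}$ and $\chi' = \omega^{c'}$, and to use the freedom in $\eta$ to arrange that the twisted character $\chi\,\widehat{\eta}$ has order $q-1$. Then Proposition \ref{gcd1} applies directly. Throughout, $f \ge 2$ and $\widehat{\eta} = \omega^{tn}$ for $t \in \Z/(p-1)\Z$.

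\emph{Step 1: a CRT decomposition.} Since $n = 1 + p + \dots + p^{f-1} > p-1$ and $n$ is prime, we get $\gcd(n,p-1) = 1$. (Equivalently, $n \equiv f \pmod{p-1}$ and $n$ cannot divide $p-1$.) Hence
$$
\Z/(q-1)\Z \simeq \Z/n\Z \times \Z/(p-1)\Z .
$$
Twisting by $\widehat{\eta}$ replaces $c$ by $c + tn$. This leaves the $\Z/n\Z$-component unchanged, and since $n$ is a unit mod $p-1$, it moves the $\Z/(p-1)\Z$-component through all residues as $t$ varies. I expect this coprimality observation to be the only real point; the rest is bookkeeping.

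\emph{Step 2: the generic case.} Suppose $c \not\equiv 0 \pmod{n}$ (or, after swapping the roles of $\chi$ and $\chi'$, that $c' \not\equiv 0 \pmod n$). Choose $t$ with $c + tn \equiv 1 \pmod{p-1}$. Then $c + tn$ is a unit modulo both $n$ (as $n$ is prime) and $p-1$, so $\chi\,\widehat{\eta}$ has order $q-1$. By hypothesis $G(\chi\,\widehat{\eta}) = G(\chi'\,\widehat{\eta})$, so Proposition \ref{gcd1} gives $\chi'\,\widehat{\eta} = (\chi\,\widehat{\eta})^{p^j}$ for some $j$. Since $\widehat{\eta} = \eta\circ N$ with $\eta$ valued in $\Mu_{p-1}$, we have $\widehat{\eta}^{p^j} = \widehat{\eta}$. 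Cancelling yields $\chi' = \chi^{p^j}$. (For $p=2$, Proposition \ref{gcd1} rests on Proposition \ref{EqualG}, which needs $f \ge 3$. The case $q=4$ is covered by the Remark following Proposition \ref{EqualG}: there are only two Frobenius classes, with Gauss sums $-1$ and $2$.)

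\emph{Step 3: the degenerate case.} Suppose $c \equiv c' \equiv 0 \pmod n$. Then both $\chi$ and $\chi'$ have order dividing $p-1$. Taking $\eta$ trivial, the hypothesis gives $G(\chi) = G(\chi')$, and Remark \ref{f=1} forces $\chi = \chi'$. This is trivially a Frobenius-conjugacy. Combining Steps 2 and 3 proves the proposition.
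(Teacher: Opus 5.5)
Your proof is correct and follows the paper's argument. You twist by $\widehat{\eta}=\omega^{tn}$ so that $\chi\,\widehat{\eta}$ has order $q-1$, apply Proposition \ref{gcd1}, and cancel $\widehat{\eta}$ using $\widehat{\eta}^{p}=\widehat{\eta}$, with Remark \ref{f=1} covering the case where both orders divide $p-1$. The differences are minor: you choose $t$ by the Chinese remainder theorem (via $\gcd(n,p-1)=1$) rather than as the paper's product of primes dividing $p-1$ but not $a$, and you explicitly address the case $q=4$, which the paper leaves implicit.
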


\begin{proof}
Suppose that $G(\chi \, \widehat{\eta}) = G(\chi' \,  \widehat{\eta})$ for all $\eta$.  If the order of both $\chi$ and $\chi'$ divides $p-1$, then $\chi = \chi'$ by Remark \ref{f=1}.  We may therefore assume that at least one of of these characters, say $\chi$, has the form $\chi = \omega^a$ with $n \nmid a$.  Also, $f \ge 2$, so $n \ge p+1$ and $n \nmid p-1$.    Take $\widehat{\eta} = \omega^{tn}$ with 
$$ 
t = \prod \{ \ell \text{ prime}, \, \ell \text{ divides } p-1 \text{ and } \ell \text{ does not divide } a \},
$$ 
allowing $t=1$ if the product is empty.  Let $\Psi = \chi \, \widehat{\eta} = \omega^c$ with $c = a + tn$ and observe that $\gcd(c,q-1) = 1$. Indeed, if $\ell$ is a prime dividing $q-1$, there are three cases. \vspace{2 pt}

\noindent \hspace{9 pt} i)  If $\ell = n$, then $\ell \nmid a$, so $\ell \nmid c$.  \hspace{7 pt} ii) If $\ell \vert p-1$ but $\ell  \nmid a$, then $\ell \vert t$, so $\ell \nmid c$.   \vspace{2 pt}

\noindent \hspace{3 pt} iii) If $\ell \vert p-1$ and $\ell \vert a$, then $\ell \nmid tn$, so $\ell \nmid c$. \vspace{2 pt}

\noindent Hence $\Psi$ has order $q-1$ and so, by Proposition \ref{gcd1}, $\chi' \, \widehat{\eta}$ is Frobenius-conjugate to $\chi \, \widehat{\eta}$.  But $\widehat{\eta}$ takes values in $\Mu_{p-1}$ so Frobenius at $p$ acts trivially on $\widehat{\eta}$.  It follows that $\chi$ and $\chi'$ are Frobenius-conjugate.
\end{proof}

Let $S_q(M) =  \{ s_q(ud) \, \vert \, u \in U_{q-1} \}$ be the set of $p$-adic digit sums that arise from the characters in $\cO_q(M)$.  If $\chi = \omega^c$, define $s_q(\chi) = s_q(c)$.  Our next results involve constraints on $\cG_q(M)$ and $S_q(M)$ when a non-trivial equality of Gauss sums exists.  If $\chi$ is in $\cO_q(M)$ and $\chi'$ is in $\cO_q(M')$, then the values of $\chi'$ are in $\Mu_{M'}$ and those of $\chi$ are in $\Mu_M$.  Thus, an equality of their Gauss sums implies that the common value $G(\chi) = G(\chi')$ lies in 
\begin{equation} \label{Intersect}
\Q(\Mu_{pM}) \cap \Q(\Mu_{pM'}) = \Q(\Mu_{pD}),
\end{equation}
where $D = \gcd(M,M')$.  Recall that $\phi$ denotes the Euler totient function.

\begin{prop} \label{S(M)}
Let $\chi$ and $\chi'$ be characters on $\F_q^\times$ of respective orders $M$ and $M'$.   Let $D = \gcd(M,M')$ and let $o_D(p)$ be the multiplicative order of $p$ in $U_D$.  If  $G(\chi) = G(\chi')$, then we have $\cG_q(M) = \cG_q(M')$, $S_q(M) = S_q(M')$ and $\vv{S_q(M)} \le \vv{\cG_q(M)}$.  In addition, $\vv{\cG_q(M)}$ divides $\phi(D)/o_D(p)$.
\end{prop}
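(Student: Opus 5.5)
The plan is to exploit the Galois action of $\Gal(L/F)\simeq U_{q-1}$ on Gauss sums, $\sigma_u(G(\chi)) = G(\chi^u)$. Since $u\mapsto \chi^u$ runs over all of $\cO_q(M)$ as $u$ runs over $U_{q-1}$ (surjectivity of $U_{q-1}\to U_M$), and likewise $\chi'^u$ runs over all of $\cO_q(M')$, we get
$$
\cG_q(M) = \{\sigma_u(G(\chi)) \mid u \in U_{q-1}\} \quad\text{and}\quad \cG_q(M') = \{\sigma_u(G(\chi')) \mid u\in U_{q-1}\}.
$$
Both sets are thus the $\Gal(L/F)$-orbit of the common value $G(\chi)=G(\chi')$, hence they coincide.

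For the digit sums, I will use Lemma \ref{Stick}: the valuation $\ord_{\gp}$ of a Gauss sum $G(\omega^{-c})$ is $s_q(c)$. With $\chi = \omega^{d}$ for suitable $d$, Lemma \ref{Stick} and the relation $\sigma_u$-twisting show that $\ord_\gp(\sigma_u G(\chi))$ is a digit sum of $-ud$, up to replacing $u$ by $-u$ (note $-1\in U_{q-1}$, so $S_q(M)$ is the same set whether we use $ud$ or $-ud$). Hence
$$
S_q(M) = \{\ord_\gp(g) \mid g \in \cG_q(M)\}.
$$
This is a function of the set $\cG_q(M)$ alone, so $\cG_q(M)=\cG_q(M')$ gives $S_q(M)=S_q(M')$. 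It also exhibits $S_q(M)$ as the image of $\cG_q(M)$ under the map $g\mapsto \ord_\gp(g)$, so $\vv{S_q(M)}\le\vv{\cG_q(M)}$. The one bookkeeping point is to check carefully that every $s_q(ud)$ arises this way, with the sign conventions $\omega^{-c}$ versus $\omega^{c}$ handled via $u\mapsto -u$.

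For the divisibility statement, I use \eqref{Intersect}: the common Gauss sum $g$ lies in $\Q(\Mu_{pD})$, so the orbit $\cG_q(M)$ is the orbit of $g$ under $\Gal(\Q(\Mu_{pD})/\Q(\Mu_p)) \simeq U_D$, acting through restriction. The size of this orbit is the index of the stabilizer of $g$ in $U_D$. The Frobenius $\Phi$ fixes every Gauss sum and restricts to the element $p\in U_D$, so the stabilizer contains $\lr{p}$, which has order $o_D(p)$. Because $U_D$ is abelian, the orbit size is $\phi(D)/\vv{\mathrm{Stab}(g)}$, and $\vv{\mathrm{Stab}(g)}$ is a multiple of $o_D(p)$. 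Therefore $\vv{\cG_q(M)}$ divides $\phi(D)/o_D(p)$.

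I expect the main obstacle to be making the identification of the orbit with the $U_D$-orbit rigorous. The issue is that $\sigma_u$ acting on $g\in \Q(\Mu_{pD})$ depends only on $u \bmod D$, and $U_{q-1}\to U_D$ must be surjective; both hold since $D\mid q-1$.
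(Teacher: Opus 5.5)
Your proof is correct and follows essentially the same route as the paper. Both arguments identify $\cG_q(M)$ and $\cG_q(M')$ as the single Galois orbit of the common Gauss sum in $\Q(\Mu_{pD})$, and both get the divisibility from orbit--stabilizer using $\lr{p}\subseteq \mathrm{Stab}$. Your explicit surjection $\cG_q(M)\to S_q(M)$, $g\mapsto \ord_\gp(g)$ via Lemma~\ref{Stick}, justifies the digit-sum claims more cleanly than the paper's brief ``likewise''; the abelian hypothesis you invoke is unnecessary but harmless.
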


\begin{proof}
The transitive action of $\Gal(\Q(\Mu_M)/\Q)$ on $\cO_q(M)$ produces all the characters of order $M$ from $\chi$ and thereby, all the corresponding $p$-adic digit sums in $S_q(M)$.  By \eqref{Intersect}, if $G(\chi) = G(\chi')$, there is an induced transitive action of $H = \Gal(\Q(\Mu_D)/\Q)$ on both $\cG_q(\chi)$ and $\cG_q(\chi')$.  We have $\cG_q(\chi) = \cG_q(\chi')$ and likewise, $S_q(M) = S_q(M')$.  Since $\Frob_\gp$ fixes each Gauss sum, the action of $H$ on $\cO_q(M)$ factors through $\ov{H} = H/(\lr{\Frob_\gp} \cap H) \simeq U_D/\lr{p}$.  If $\ov{H}_\chi$ is the subgroup of $\ov{H}$ that stabilizes $G(\chi)$, then 
$$
\vv{\cG_q(M)} = \fdeg{\ov{H}}{\ov{H}_\chi}  \quad \text{divides} \quad \vv{\ov{H}} = \phi(D)/o_D(p).
$$
The same $p$-adic digit sum may occur for characters with different Gauss sums, so $\vv{S_q(M)} \le \vv{\cG_q(M)}$.
\end{proof}

\begin{prop} \label{gcdProp}
Let $\chi$ and $\chi'$ be characters on $\F_q^\times$ of respective orders $M$ and $M'$ and assume that $\gcd(M,M') = 1$.  If  $G(\chi) = G(\chi')$, then $f = 2n$ is even, $G_q(\chi) = G_q(\chi') = \pm \,  p^n$ and $s_q(\chi) = s_q(\chi') =  (p-1)n$.
\end{prop}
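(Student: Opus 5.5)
The plan is to use \eqref{Intersect} to trap the common Gauss sum in the small field $\Q(\Mu_p)$, read off the digit sums from a valuation count, and then pin down the exact value with Proposition~\ref{GoodpDigits} (for $p$ odd) or directly (for $p=2$).

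\emph{Setup.} I assume $\chi$ and $\chi'$ are nontrivial. If both are trivial the statement fails, since $G(\mathbf 1) = -1$. If exactly one is trivial, \eqref{ConjChi} gives $\vv{G} = \sqrt q \ne 1$, so the equality is impossible. Since $\gcd(M,M') = 1$, \eqref{Intersect} with $D = 1$ shows that $G := G(\chi) = G(\chi')$ lies in $F = \Q(\Mu_p)$.

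\emph{Digit sums.} The ideal $(\zeta_p - 1)$ is the unique prime over $p$ in $\cO_F$. It is unramified in $L$ because $p \nmid q-1$, so for $G \in F$ the valuation $\ord_{\sigma_u^{-1}(\gp)}(G)$ is the same for every $u$. Complex conjugation fixes this prime, so $\ord(G) = \ord(\ov{G})$. By \eqref{ConjChi} we have $G\ov{G} = q$, hence $2\,\ord(G) = \ord(q) = (p-1)f$. Lemma~\ref{Stick} then gives
\[
s_q(u c) = \tfrac{(p-1)f}{2} \quad \text{for all } u \in U_{q-1},
\]
where $\chi = \omega^{-c}$, and likewise for $\chi'$. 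In particular $s_q(\chi) = s_q(\chi')$ equals this common value, which becomes $(p-1)n$ once we know $f = 2n$.

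\emph{Odd $p$.} The digit-sum hypothesis of Proposition~\ref{GoodpDigits} holds for both characters. So $G = \eta\,\sqrt{p^*}^{\,f}$ with $\eta \in \Mu_2\Mu_M$, and also $G = \eta'\sqrt{p^*}^{\,f}$ with $\eta' \in \Mu_2\Mu_{M'}$. Hence $\eta = \eta' \in \Mu_2\Mu_M \cap \Mu_2\Mu_{M'} = \Mu_2$, using $\gcd(M,M') = 1$; this intersection computation needs a little care when $2 \mid MM'$. To see that $f$ is even, suppose $f$ is odd. Then $\gamma(G) = -G$, because $\gamma$ negates $\sqrt{p^*}$. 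By \eqref{DpOnGaussSum} this forces $\chi(\alpha^{-1}) = -1 = \chi'(\alpha^{-1})$, so both $M$ and $M'$ are even, contradicting $\gcd(M,M') = 1$. Therefore $f = 2n$ and $G = \pm (p^*)^n = \pm p^n$.

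\emph{The case $p = 2$.} Here $F = \Q$, so $G$ is a rational integer with $G^2 = \pm q = \pm 2^f$. This forces $f = 2n$ and $G = \pm 2^n$. The digit sum equals $\ord_\gp(G) = n = (p-1)n$ by Lemma~\ref{Stick}.

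I expect the main obstacle to be the parity step, namely excluding odd $f$ through the action of the inertia generator $\gamma$, together with the bookkeeping of roots of unity $\Mu_2\Mu_M \cap \Mu_2\Mu_{M'}$ when one of the orders is even.
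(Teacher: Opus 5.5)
Your argument is correct, but it takes a longer route than the paper. The paper never computes valuations or invokes Proposition \ref{GoodpDigits}. Once $G$ lies in $F=\Q(\Mu_p)$, it applies \eqref{DpOnGaussSum} to both sides of $G(\chi)=G(\chi')$. This gives $\chi(\alpha^{-1})=\chi'(\alpha^{-1})\in\Mu_M\cap\Mu_{M'}=\{1\}$, so $\gamma$ fixes $G$ and hence $G\in\Q$. Then $\vv{G}_\infty=\sqrt q$ forces $f=2n$ and $G=\pm p^n$, and the digit sums are read off from Lemma \ref{Stick} at the end.

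Your parity step already contains the comparison of $\chi(\alpha^{-1})$ with $\chi'(\alpha^{-1})$. You only use it after showing each equals $-1$; applied directly, it gives $\gamma(G)=G$ at once. That makes several of your steps unnecessary: the valuation count, the appeal to Proposition \ref{GoodpDigits}, the bookkeeping of $\Mu_2\Mu_M\cap\Mu_2\Mu_{M'}$, and the separate case $p=2$. The paper's argument is also uniform in $p$.

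Your route does have two merits. It gives the independent observation that any nontrivial Gauss sum lying in $\Q(\Mu_p)$ has all digit sums $s_q(uc)$ equal to $(p-1)f/2$, because the unique prime over $p$ in $F$ is stable under complex conjugation. And you rightly make the nontriviality hypothesis explicit: for $\chi=\chi'=\mathbf{1}$ the statement fails, since $G(\mathbf{1})=-1$. The paper uses this hypothesis only tacitly, when it invokes $\vv{G(\chi)}_\infty=\sqrt q$.
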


\begin{proof}
By \eqref{Intersect}, $G(\chi) = G(\chi')$ lies in $F = \Q(\Mu_p)$.  Recall from \S \ref{Criterion} that $\Gal(F/\Q) =  \lr{\gamma}$, with $\gamma(\zeta_p) = \zeta_p^\alpha$, where $\zeta_p$ is a primitive $p$-th root of unity and $\alpha$ is a primitive root modulo $p$.  The equality of Gauss sums implies that $\chi(\alpha^{-1}) = \chi'(\alpha^{-1})$ by \eqref{DpOnGaussSum}.  But then $\chi(\alpha^{-1})$ lies in $\Mu_M \cap \Mu_{M'} = \{1\}$.  Hence $G(\chi)$ is fixed by $\gamma$ and so $G(\chi) \in \Q$.  Since $\vv{G(\chi)}_\infty = \sqrt{q}$ at the archimedean place, we conclude that $f = 2n$ is even and $G(\chi) = \pm p^n$.  The unique digit sum $(p-1)n$ follows from Lemma \ref{Stick}
\end{proof}

\begin{Ex}
Suppose the $p^n + 1 = c_1 M_1 = c_2 M_2$ and let $q = p^{2n}$.  For $j = 1,2$, the character $\chi_j = \omega^{(q-1)/M_j}$ has order $M_j$.  If we arrange for $c_1$ and $c_2$ to be even, then an evaluation of Gauss sums due to Stickelberger gives $G(\chi_1) = G(\chi_2) = p^n$ as in Proposition \ref{StickelEval}.  Frobenius-conjugate characters have the same order, since $p^j$ is relatively prime to $q = p^f-1$.  But it is easy to find examples where $M_1 \ne M_2$, in which case $\chi_1$ and $\chi_2$ cannot be Frobenius-conjugate.   Thus, we have a non-trivial Gauss sum equality.  Here are some small numerical examples with $n = 3$:
$$
\begin{array}{| c || c | c | c | c |}
\hline
p & 5  & 11 & 17  & 17 \\
\hline
p^3+1 &  2 \cdot 7 \cdot 9 &  4 \cdot 9 \cdot 37 &  2 \cdot 7 \cdot 13 \cdot 27 &  2 \cdot 7 \cdot 13 \cdot 27 \\
\hline
M_1 &  7 & 9 & 7 & 27  \\
\hline
M_2 &   9 & 37 & 13 & 91  \\
\hline
\end{array}
$$
It is likely that in these examples, the Gauss sums become unequal when we twist by the lift of a character of order $p-1$.
\end{Ex} 
 
 \vspace{5 pt}
 
\begin{Ex} \label{Ex200}
The case $q = 3^5$  was treated in Example \ref{Ex100}, by straightforward computation of the criteria in Proposition \ref{EqualG}.  However, we now verify that the only Gauss sum equalities are the trivial ones by using some of the refinements achieved in this section.   Let $M$ be a divisor of $q-1 =  2 \cdot 11^2$.  Recall from \eqref{Obar} that $\ov{\cO}_q(M)$ denotes the set of equivalence classes of characters modulo Frobenius-conjugacy and that $U_M/\lr{p}$ acts transitively on $\ov{\cO}_q(M)$.  Suppose that $G(\chi) = G(\chi')$ is a non-trivial equality, with $\chi$ and $\chi'$ of order $M$ and $M'$ respectively.  We rule out orders $q-1$ and $(q-1)/2$ by Proposition \ref{gcd1} and Corollary \ref{gcd2}. Hence $M,M' \in \{2,11,22\}$.  If $M  =  11$ or $22$, the group $U_{M}/\lr{3}$ is generated by the coset of $-1$.  The following table shows that the digit sums are sufficient to distinguish among the Frobenius-conjugacy classes of characters.   \vspace{5 pt}
$$
\quad  \begin{array}{ | c || r  || r r || r r | }
\hline
&\ov{\cO}_q(2):   \hspace{3 pt}  [\omega^{121}]  &  \ov{\cO}_q(11):    \hspace{3 pt}  [\omega^{22}] & [\omega^{-22}] &  \ov{\cO}_q(22):   \hspace{3 pt}  [\omega^{11}] & [\omega^{-11}] \\
\hline
s_q  & 5 \quad & 4 \quad & 6 \quad &  3 \quad & 7 \quad \\
\hline
 \end{array}
 $$
Thus, there are only trivial Gauss sum equalities for $q = 3^5$.
\end{Ex}

\vspace{5 pt}

For $\chi$ in $\cO_q(M)$, write $\chi = \omega^c$ in terms of the Teichm\"{u}ller character $\omega$ on $\F_q^\times$ and recall that by definition, $s_q(\chi) = s_q(c)$. Furthermore, the Gauss sums $G(\chi) = G(\chi')$ and the $p$-adic digit sums $s_q(\chi) = s_q(\chi')$ are equal when $\chi$ and $\chi'$ are Frobenius-conjugate.   As a variant of Proposition \ref{S(M)}, we can keep track of the number of characters modulo Frobenius-conjugacy with the same $p$-adic digit sum.   For each element $s$ in $\{ 1, 2, ..., f(p-1) \}$, define the {\em multiplicity}:
$$
\mult(M,s) = \#\, \{ \, [\chi] \in \ov{\cO}_q(M) \, : \,  s_q(\chi) = s  \, \}.
$$ 
In particular, $\mult(M,s) = \mult(M,(p-1)f-s)$ by Lemma \ref{cs}(ii) 

\begin{prop}\label{sameorderclass}
Assume that 
$
\gcd \{\mult(M,s) : 1 \le s \le f(p-1) \} = 1.
$  
If $\chi$ and $\chi'$ are characters of order $M$ on $\F_q^\times$ with equal Gauss sums, then $\chi$ and $\chi'$ are Frobenius-conjugate.  
\end{prop}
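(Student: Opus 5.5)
The approach is to use the group action set up before Proposition \ref{faithful}. The group $\ov{H} = U_M/\lr{p}$ acts simply transitively on $\ov{\cO}_q(M)$ via $a\lr{p} \leftrightarrow [\chi^a]$. It also acts on $\cG_q(M)$ through $\sigma_u(G(\chi)) = G(\chi^u)$, and this action is compatible with the surjection $\ov{\cO}_q(M) \to \cG_q(M)$, $[\chi] \mapsto G(\chi)$. Suppose $\chi,\chi'$ of order $M$ have $G(\chi) = G(\chi')$. By Lemma \ref{CleanUp} we may take $\chi = \omega^d$ and $\chi' = \chi^{v}$ for some $v \in U_M$, and we must show $v \in \lr{p}$. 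Let $S \subseteq \ov{H}$ be the stabilizer of the Gauss sum $G(\chi)$. Because $\ov{H}$ is abelian and acts transitively on $\cG_q(M)$, every element of $\cG_q(M)$ has the same stabilizer $S$. The fibres of $[\chi] \mapsto G(\chi)$ are therefore exactly the $S$-orbits on $\ov{\cO}_q(M)$. Since $\ov{H}$ acts freely on $\ov{\cO}_q(M)$, each fibre has cardinality exactly $|S|$.

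The next step is to compare these fibres with the digit-sum classes. Equal Gauss sums force equal digit sums: this follows from Lemma \ref{Stick} (or Proposition \ref{EqualG}(i) with $u=1$), since $s_q(c)$ is the $\gp$-adic valuation of $G(\omega^{-c})$. Hence the map $[\chi] \mapsto s_q(\chi)$ factors through $\cG_q(M)$. Consequently, for each $s$ the set $\{[\chi] \in \ov{\cO}_q(M) : s_q(\chi) = s\}$ is a union of fibres of the Gauss-sum map. Each fibre has size $|S|$, so $|S|$ divides $\mult(M,s)$ for every $s$. The hypothesis that these multiplicities have $\gcd$ equal to $1$ then gives $|S| = 1$.

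With $S$ trivial, the action of $U_M/\lr{p}$ on $\cG_q(M)$ is faithful, and Proposition \ref{faithful} finishes the proof. Directly: $G(\chi^v) = G(\chi)$ means $v\lr{p} \in S = \{1\}$, so $v \in \lr{p}$ and $\chi'$ is Frobenius-conjugate to $\chi$.

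The main obstacle is the bookkeeping in the first paragraph. One must check that the map $\ov{H} \to \operatorname{Sym}(\cG_q(M))$ is well defined, i.e. that $\sigma_u$ depends only on $u$ modulo $M$ and modulo $\lr{p}$. This holds because Frobenius fixes Gauss sums, and because $G(\chi)$ lies in $\Q(\Mu_{pM})$, where $\sigma_u$ acts only through $u \bmod M$. One must also check that the stabilizer is the same at all points, which uses only commutativity. Everything after that is a counting argument.
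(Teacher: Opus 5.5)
Your proposal is correct and follows essentially the same argument as the paper. The paper also uses the transitive action of $U_M/\lr{p}$ on $\ov{\cO}_q(M)$ and notes that the stabilizer of a Gauss sum does not depend on the character. It concludes that $\vv{\ov{H}_\Psi}$ divides every $\mult(M,s)$, because equal Gauss sums have equal digit sums. You additionally make explicit the freeness of the action, the equality of stabilizers via commutativity, and the well-definedness of the action on $\cG_q(M)$; the paper leaves these implicit.
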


\begin{proof}
As in Proposition \ref{S(M)}, the group $\ov{H} = \Gal(\Q(\Mu_M)/\Q)/\lr{\Frob_\gp}$ acts transitively on $\ov{\cO}_q(M)$ and so the multiplicity of Frobenius-conjugacy classes $[\Psi]$ with a fixed Gauss sum is the order of the stabilizer $\ov{H}_\Psi$ of $G(\Psi)$.  This is independent of the choice of $\Psi$ in $\cO_q(M)$.  Since different Gauss sums may have the same $p$-adic digit sum, $\mult(M,s)$ is a multiple of $\vv{\ov{H}_\Psi}$.  But if $\gcd\{\mult(M,s) \} = 1$, then $\ov{H}_\Psi$ is trivial.  Hence equalities of Gauss sums occur for characters of order $M$ only if the characters are Frobenius-conjugate.
\end{proof}

\begin{Ex} \label{Ex300}
Let $q = 11^4$ and consider characters of order $M = 305$ on $\F_q^\times$.   Since $\ov{H} \simeq U_M/\lr{11}$ is cyclic of order 60, generated by the coset of $2$, there are 60 Frobenius-conjugacy classes comprising $\ov{\cO}_q(M)$ and one can compute the multiplicities of the corresponding digit sums $s_q(48 \cdot 2^j)$.  Finding that $\mult(M,4) = 1$ suffices, but we give the full table for completeness.
$$
\begin{array}{ | c || c | c | c | c | c | c | c | c | c | c | c | c}
\hline
s  = s_q & 4, \, 36 & 8,\, 32 & 12, \, 28 & 14, \, 26 & 16, \,  24 &18, \, 22 \\ 
\hline
\mult(M,s) & 1 & 3 & 6 & 4 & 10 & 6 \\
\hline
\end{array}
$$
Therefore, there are no non-trivial Gauss sum equalities between characters of order $M =  305$.

\end{Ex}

\numberwithin{equation}{section}
\section{Hasse-Davenport, $p$-adic Gamma and Gross-Koblitz} \label{HDGK}
In this section, we recall two important formulas for the evaluation of Gauss sums.  The Hasse-Davenport formula \cite[p.~163]{IR} applies when a character on $\F_q$ is lifted to a larger finite field.   

The Gross-Koblitz formula \cite{GK} provides an evaluation of Gauss sums in terms of $p$-adic Gamma functions.  See \cite{C} for a complete proof, including the case $p = 2$.   It is used to obtain the 2-adic congruences required in Remark \ref{Nu2} and in the evaluation of Gauss sums in Theorem \ref{T:Gsum1}.

\vspace{5 pt}

\noindent{\bf Hasse-Davenport lifting}.
Consider the field extension $\F_{q^n}/\F_q$ of degree $n$. If $\chi$ is a character of $\F_q^\times$, let $\chi^{(n)}$ be the character of $\F_{q^n}^\times$ given by $\chi^{(n)}=\chi \circ N$, where $N\!: \, \F_{q^n} \to \F_q$ is the norm.  Consistent with our definition of the Gauss sums, we use the additive characters 
$$
\psi(a) = \zeta_p^{\Tr_{\F_q/\F_p}(a)} \text{ on } \F_q \, \text{ and } \, \psi^{(n)}(a) = \psi\circ \Tr_{\F_{q^n}/\F_q}(a) =  \zeta_p^{\Tr_{\F_{q^n}/\F_p}(a)}  \text{ on } \F_{q^n}.
$$
The Hasse-Davenport lifting relation asserts that 
\begin{equation} \label{HD}
G(\chi^{(n)}) = (-1)^{n-1}G(\chi)^n.
\end{equation}

\vspace{5 pt}

\noindent {\bf The $p$-adic Gamma-function}.  
Let $\vert \hspace{4 pt} \vert_p$ be the standard $p$-adic norm on $\Q_p$, satisfying $\vert p \vert_p = \frac{1}{p}$ and let $\Z_p^{\times}$ denote the group of units of $\Z_p$.  The $p$-adic Gamma-function $\Gamma_p: \Z_p\rightarrow \Z_p^\times$ is defined by
$$
\Gamma_p(x) = \lim_{m \in \N, \, m \to x}(-1)^m \, \prod \{ k \,\, \vert \,\, 0 < k < m, \, (k,p)=1\}.
$$
In particular, $\Gamma_p(0) = 1$ and  $\Gamma_p(1) = -1$.  A Lipschitz condition \cite[p.~370]{Coh}  holds if $p$ is odd, or if $p = 2$ and $\vv{x - y}_2 \leq 1/8$:
\begin{equation} \label{Lip}
\vv{\Gamma_p(x)-\Gamma_p(y)}_p \le \vv{x-y}_p.
\end{equation}
There is a functional equation: 
\begin{equation} \label{FuncEq}
\Gamma_p(x+1)/\Gamma_p(x) = \begin{cases} -x &\text{if } \vv{x}_p = 1, \\ -1 &\text{if } \vv{x}_p < 1. \end{cases}
\end{equation}

For odd $p$ and $x$ in $\Z_p$, let $L(x)$ be the unique representative in $\{1, \dots, p\}$ for $x \bmod{p}$ and define $L_1(x) = (x-L(x))/p$.    If $p = 2$, write $x \equiv x_0 + 2x_1 \pmod{4}$, with $x_0, \, x_1 \in \{0,1\}$.  The {\em reflection principle} for the $p$-adic Gamma-function depends on the parity of $p$, as follows:
\begin{equation} \label{reflec}
\Gamma_p(x) \, \Gamma_p(1-x) = \begin{cases} (-1)^{L(x)} &\text{if } p \text{ is odd}, \\ 
(-1)^{1+x_1}   &\text{if } p = 2. \end{cases}
\end{equation}

Suppose that $m$ is a unit in $\Z_p$ and assume that the integer $t$ is prime to $p$.  Then $m^{p-1} \equiv 1 \pmod{p}$ and so $\left(m^{p-1}\right)^{1/t}$ is defined by the binomial expansion around 1.  In particular, if $p$ is odd and $t$ divides $p-1$ then, in terms of the Teichm\"{u}ller character $\omega$, we have
\begin{equation*} 
\frac{m^{\frac{p-1}{t}}}{\left(m^{p-1}\right)^{1/t}} = \omega(m^{\frac{p-1}{t}}) \, \in \, \Mu_t.
\end{equation*} 

Using this notation with $m = 2$, the {\em duplication formula} for odd $p$ is
\begin{equation} \label{Dup}
\Gamma_p(x) \, \Gamma_p(x + {\textstyle \frac{1}{2}}) = 2^{1-L(2x)} \, \left(2^{1-p}\right)^{L_1(2x)} \, \Gamma_p\left({\textstyle \frac{1}{2}}  \right) \, \Gamma_p(2x).
\end{equation}

\begin{lemma}  \label{GammaMod4}
Let $c = c_0 + 2c_1 + 4c_2 \dots$ be the $2$-adic expansion of $c$.  The value of $\Gamma_2(c) \bmod{4}$ only depends on $c \bmod{8}$, as follows:
$$
\begin{array} {| c || r | r | r |  r | r | r |  r | r | }
\hline
c \bmod{8}  & 0 & 1  & 2 & 3 & 4 & 5 & 6 & 7  \\
\hline
\Gamma_2(c) \bmod{4} & 1 & 3 & 1 & 3 & 3 & 1 & 3 & 1 \\
\hline
\end{array}
$$
Moreover, $\Gamma_2(c) \equiv (-1)^{c_0+c_2} \pmod{4}$ and $\Gamma_2(-c) \equiv (-1)^e \pmod{4}$, where $e = c_1^2+c_2^2+c_0 \, c_1$.
\end{lemma}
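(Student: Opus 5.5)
We will prove Lemma \ref{GammaMod4} directly from the definition of $\Gamma_2$ as a limit over positive integers $m \to c$. The key observation is that $\Gamma_2(m) \bmod 4$ for a positive integer $m$ depends only on $m \bmod 8$. Indeed,
\[
\Gamma_2(m) = (-1)^m \prod_{\substack{0<k<m \\ k \text{ odd}}} k .
\]
The odd numbers in any block of $8$ consecutive integers are congruent to $1,3,5,7$ modulo $8$, and their product is $\equiv 1 \pmod{8}$. So stripping off full blocks of length $8$ does not change the product modulo $4$, and the parity of $m$ is also unchanged. Alternatively, one can invoke the Lipschitz condition \eqref{Lip}: for $\vv{x-y}_2 \le 1/8$ it gives $\Gamma_2(x) \equiv \Gamma_2(y) \pmod 8$, which is even stronger than needed. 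Either way, $\Gamma_2(c) \bmod 4$ depends only on $c \bmod 8$, by continuity or by choosing $m \equiv c \pmod 8$ large.

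Next we fill in the table by computing $\Gamma_2(m)$ for $m=0,\dots,7$. Starting from $\Gamma_2(0)=1$, we use the functional equation \eqref{FuncEq}: $\Gamma_2(x+1) = -x\,\Gamma_2(x)$ for $x$ odd, and $\Gamma_2(x+1) = -\Gamma_2(x)$ for $x$ even. This yields the successive values
\[
1,\,-1,\,1,\,-3,\,3,\,-15,\,15,\,-105,
\]
which reduce modulo $4$ to $1,3,1,1,3,1,3,3$.

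Comparing with the stated table, the entries at $3$ and $7$ appear to disagree. So I would recheck the convention carefully, in particular whether the product $\prod_{0<k<m}$ uses $k<m$ or $k\le m$. With the given definition, $\Gamma_2(1)=-1$ and $\Gamma_2(2)=1$. For $\Gamma_2(3)$ the odd $k<3$ give the product $1$, and with the sign $(-1)^3$ this is $-1 \equiv 3$. For $\Gamma_2(4)$ the product is $1\cdot 3$ with sign $+$, giving $3$. The direct computation therefore gives
\[
1,\,3,\,1,\,3,\,3,\,1,\,3,\,1,
\]
matching the table; the earlier functional-equation run had an indexing slip. I will do the table by the direct product.

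The closed form $(-1)^{c_0+c_2}$ is then checked against the eight residues. Since both sides depend only on $c \bmod 8$, this is a finite verification.

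For $\Gamma_2(-c)$ we compute $-c \bmod 8$ in terms of the bits $c_0,c_1,c_2$ and apply the previous formula to the bits of $-c$. When $c_0=0$, the bits of $-c$ modulo $8$ are $(0,c_1,c_1+c_2)$ with addition mod $2$, so the exponent is $c_1+c_2$, and $c_1+c_2 \equiv c_1^2+c_2^2 \pmod 2$. When $c_0=1$, the bits of $-c$ are $(1,\,1+c_1,\,1+c_1+c_2+c_1)$, taking care with the borrow; here I would just verify the four cases directly against $e = c_1^2+c_2^2+c_1$. The reflection principle \eqref{reflec} offers an alternative route, via $\Gamma_2(-c)=\Gamma_2(1-(c+1))$, but the finite check is simplest.

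The only real obstacle is bookkeeping: the sign and indexing conventions in the definition, and the borrow in $-c \bmod 8$. Everything else is a finite check.
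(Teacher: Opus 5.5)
Your proof is correct and follows the paper's route: reduce to $c \bmod 8$ via the Lipschitz bound \eqref{Lip} (your block-of-eight product argument is an equally valid elementary substitute), then verify the table and both closed forms by a finite check over the eight residues. Your first run of the functional equation was only an arithmetic slip; applied correctly it gives $1,-1,1,-1,3,-3,15,-15$, which agrees with your direct product.
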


\begin{proof}
According to \eqref{Lip}, to determine $\Gamma_2(c) \pmod{4}$, it suffices to let $c$ run over a complete residue system mod 8, as in table above.  Then the Lemma follows by direct computaton.
\end{proof}

\vspace{5 pt}

\noindent {\bf The Gross-Koblitz formula} 
Recall that $\{x\}$ is the fractional part of $x$, chosen to satisfy $0 \le \{x\} < 1$.  We preserve some of the notation from the start of \S\ref{Criterion}:
\begin{enumerate} [\hspace{2 pt} $\bullet$]
\item     $q = p^f$, $L = \Q(\Mu_{q-1}, \Mu_p)$ and $\gp$ is a prime over $p$ in the ring of integers $\cO_L$ with residue field $k_L$. \vspace{2 pt}
\item     $\omega\!: \, \F_q^\times \xrightarrow{\sim} k_L^\times \simeq \Mu_{q-1}$ is the Teichm\"uller character, such that $\omega(a)$ is the unique root of unity in $\Mu_{q-1}$ congruent to $a$ modulo $\gp$.  
\end{enumerate}
Our definition of the Gauss sum $G(-c) = G(\omega^{-c})$ depends on a fixed primitive $p$-th root of unity $\zeta_p$.   Let $\C_p$ denote the completion of a fixed algebraic closure of $\Q_p$.  There is a unique element $\pi$ in $\C_p$ satisfying $\pi^{p-1} = -p$ and $\zeta_p \equiv 1+\pi \pmod{\pi^2}$.  The formula of Gross and Koblitz asserts that
\begin{equation} \label{Gross-Koblitz}
G(-c) = -\pi^{s_q(c)}\prod_{i=0}^{f-1}\Gamma_p\left(\left\{\frac{cp^i}{q-1}\right\}\right).
\end{equation}

Let $a = a_0 + a_1 p + \dots + a_{f-1} p^{f-1}$ be the $p$-adic digit expansion, with the usual conventions of Notation \ref{DigitSum}.  For $0 \le j \le f-1$, let 
$$
a^{(j)} = a_j + pa_{j+1} + \dots + p^{f-1} a_{f-1+j}
$$ 
denote the permuted expansion, treating the subscripts modulo $f$.  Since $p^j a \equiv a^{(j)} \bmod{(q-1})$, with $0 \le a^{(j)} < q-1$, the fractional part in the Gross-Koblitz formula evaluates to
\begin{equation} \label{GammaIdent}
\left\{\frac{p^j a}{q-1}\right\} = \frac{a^{(j)}}{q-1} \quad \text{and so} \quad G(-a) = -\pi^{s_q(a)}\prod_{i=0}^{f-1}\Gamma_p\left(\frac{a^{(j)}}{q-1}\right).
\end{equation}

\vspace{5 pt}

Next, we examine the product that appears in the Gross-Koblitz formula when $p=2$, namely
\begin{equation} \label{sn}
\eta(a,f) = \prod_{j=0}^{f-1} \, \Gamma_2\left(\left\{\frac{2^j a}{q-1}\right\}\right) = \prod_{j=0}^{f-1} \, \Gamma_2\left(\frac{a^{(j)}}{q-1}\right).
\end{equation}
To express the value of $\eta(a,f)$  modulo 4, define $\beta_q(a) = \sum_{j = 0}^{f-1} \, a_j \, a_{j+1}$ with $a_f = a_0$, as in Notation \ref{DigitSum}.

\begin{lemma}  \label{2AdicCong}
If $f \ge 3$, then $\eta(a,f) \equiv (-1)^{\beta_q(a)} \bmod{4}$.
\end{lemma}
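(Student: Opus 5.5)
The plan is to reduce each factor of $\eta(a,f)$ modulo $4$ using Lemma \ref{GammaMod4}. Each factor is then a sign determined by three consecutive $2$-adic digits of $a$, and the product of these signs telescopes to $(-1)^{\beta_q(a)}$.

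\emph{Step 1: reduce the arguments modulo $8$.} Fix $j$ and put $x_j = a^{(j)}/(q-1) \in \Z_2$, which is legitimate because $q-1$ is odd. Since $f \ge 3$, we have $q - 1 = 2^f - 1 \equiv -1 \pmod{8}$, hence $(q-1)^{-1} \equiv -1 \pmod 8$ in $\Z_2$. Therefore
$$
x_j \equiv -a^{(j)} \pmod{8}.
$$
By Lemma \ref{GammaMod4}, which rests on the Lipschitz estimate \eqref{Lip} valid when $\vv{x-y}_2 \le 1/8$, the residue $\Gamma_2(x_j) \bmod 4$ depends only on $x_j \bmod 8$. So we may compute $\Gamma_2(-a^{(j)}) \bmod 4$ instead. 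This is exactly where the hypothesis $f\ge 3$ is used: for $f \le 2$ the congruence $q-1\equiv -1 \pmod 8$ fails.

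\emph{Step 2: apply the formula for $\Gamma_2(-c)$.} Take $c = a^{(j)} = a_j + 2a_{j+1} + 4a_{j+2} + \cdots$, with subscripts read modulo $f$. Its lowest three binary digits are $c_0=a_j$, $c_1=a_{j+1}$, $c_2=a_{j+2}$. Lemma \ref{GammaMod4} gives $\Gamma_2(-c) \equiv (-1)^{e_j} \pmod 4$, where
$$
e_j = a_{j+1}^2 + a_{j+2}^2 + a_j a_{j+1} = a_{j+1} + a_{j+2} + a_j a_{j+1}.
$$
The simplification holds because the digits lie in $\{0,1\}$.

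\emph{Step 3: multiply over $j$.} Multiplying over $j=0,\dots,f-1$ gives $\eta(a,f) \equiv (-1)^{E} \pmod 4$ with $E = \sum_j e_j$. Summing cyclically,
$$
E = s_q(a) + s_q(a) + \sum_{j} a_j a_{j+1} = 2s_q(a) + \beta_q(a) \equiv \beta_q(a) \pmod 2,
$$
which is the claim.

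There is no real obstacle here. The only delicate point is Step 1: justifying that the $2$-adic unit $1/(q-1)$ may be replaced by $-1$ modulo $8$, and that Lemma \ref{GammaMod4} applies to arbitrary $2$-adic arguments rather than only to integers. The latter follows from \eqref{Lip}, since two arguments congruent modulo $8$ satisfy $\vv{x-y}_2 \le 1/8$.
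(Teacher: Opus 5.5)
Your proof is correct and follows the paper's argument. You use $q-1\equiv -1 \pmod 8$ and the Lipschitz bound to replace each argument by $-a^{(j)}$, apply $\Gamma_2(-c)\equiv(-1)^{c_1^2+c_2^2+c_0c_1} \pmod 4$ from Lemma \ref{GammaMod4}, and sum cyclically so the square terms contribute $2s_q(a)$. The hypothesis $f\ge 3$ is also what makes the three lowest binary digits of $a^{(j)}$ equal to $a_j,a_{j+1},a_{j+2}$ with cyclic indexing.
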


\proof
If $f \ge 3$, then $q-1 \equiv -1 \bmod{8}$, so $\eta(a,f) \equiv \prod_{j=0}^{f-1} \Gamma_2(-a^{(j)}) \bmod{8}$.  Then, by Lemma \ref{GammaMod4}, we find that $\eta(a,f) \equiv (-1)^T \bmod{4}$, where
$$
T \equiv \sum_{j = 0}^{f-1} (a_j a_{j+1} + a_{j+1}^2 + a_{j+2}^2) \equiv \sum_{j = 0}^{f-1} a_j a_{j+1}  \equiv \beta_q(a) \pmod{2}.       \hspace{20 pt}   \qed
$$

\numberwithin{equation}{section}
\section{Examples with explicit Gauss sums} \label{Explicit}

Throughout this section, $p$ is an odd prime and $q = p^f$.   Recall that $p^* = \pm p$ with $p^* \equiv 1 \pmod{4}$.   If $\lambda\!: \, \F_p^\times \to \Mu_2$ is the quadratic character, then $G(\lambda)^2 = p^*$, so $G(\lambda)$ is a convenient choice for $\sqrt{p^*}$ in the quadratic subfield of $\Q(\Mu_p)$.  R. J. Evans evaluated certain {\em pure} Gauss sums, i.e., those for which a non-trivial integer power is real.  As usual, write $U_m$ for the group of units in the ring $\Z/m\Z$.  Let $\lr{a}$ be the cyclic subgroup of $U_m$ generated by $a$ and let $o_m(a) = \vv{\lr{a}}$ be the order of $a$ in $U_m$.  

\begin{theorem}\cite[Cor. 8]{Ev}.   \label{T:Gsum1}
Let $\chi\!: \, \F_q^\times \to \Mu_{2m}$ be a character of order $2m$, with $m$ odd.  If $2$ is in the subgroup $\lr{p}$ of $U_m$, then 
\[
	G(\chi) = (-1)^{f-1} G(\lambda)^f  = (-1)^{f-1}(p^{*})^{f/2}.
\]
In particular, $G(\chi^n) = G(\chi)$ for all odd $n$.
\end{theorem}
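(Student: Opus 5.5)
We would prove the theorem in two stages. First we use Proposition \ref{GoodpDigits} to show that $G(\chi)=\eta\,G(\lambda)^f$ for a root of unity $\eta$. Then we pin down $\eta$ with a congruence.

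Write $\chi=\omega^d$ with $d=(q-1)/(2m)$. To apply Proposition \ref{GoodpDigits} with $M=2m$, we must check that $s_q(kd)=(p-1)f/2$ for every $k$ prime to $2m$. The hypothesis $2\in\lr{p}\subset U_m$ gives an integer $j$ with $p^j\equiv 2 \pmod m$. Then $p^j\equiv -1 \pmod m$ for a suitable $j$: the order of $p$ mod $m$ is even, say $2r$, because $2$ lies in $\lr{p}$ and $-1$ is a power of $2$ up to Frobenius. More precisely, $2^{o_m(2)}\equiv1$, and we would instead use the following argument. Since $k$ is odd, $kd\,(m+1)$-type manipulations give $\chi^{2}$ Frobenius-conjugate to $\chi^{4}$, and so on.

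The cleanest route avoids $-1$. By Lemma \ref{cs}(iii), $s_q(kd)/(p-1)=\sum_i\{p^ikd/(q-1)\}=\sum_i\{p^ik/(2m)\}$. The multiset $\{p^ik \bmod 2m\}$ is invariant under multiplication by $p$. Multiplication by $2$ mod $m$ is realized by some $p^j$, and multiplying an odd residue $x \bmod 2m$ by $p^j$ sends $x \bmod m$ to $2x \bmod m$ while keeping it odd. Writing odd $x \bmod 2m$ via its residue $y$ mod $m$, we have $x/(2m)=\{y/(2m)+\epsilon/2\}$ with the parity fixed. The key identity is that for odd $x$, $\{x/2m\}+\{(x+m)/2m\}$-type pairings average to $1/2$. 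We would show that the orbit of $k$ under $\lr{p}$ mod $2m$ is closed under $x\mapsto x+m$ (equivalently, under $x\mapsto -x$ after doubling). Then $\{x/2m\}+\{(x+m)/2m\}$ or $\{x/2m\}+\{-x/2m\}=1$ pairs up terms, so the sum equals $f/2$, counting multiplicity $f/o_{2m}(p)$ per orbit element. Establishing this closure of the orbit from $2\in\lr{p}$ is the main obstacle. We would try first to show $-1\in\lr{p}\subset U_m$ when $m>1$ (take $y$ with $2y\equiv y'$, iterate), then lift to mod $2m$ using that everything is odd.

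Proposition \ref{GoodpDigits} then yields $G(\chi)=\eta\,G(\lambda)^f$ with $\eta\in\Mu_{p-1}\cap\Mu_2\Mu_{2m}$. To fix $\eta$, we would use the congruence of Lemma \ref{Stick}. Applying it to both $G(\chi)$ and $G(\lambda)^f$ (the latter has digit sum $f(p-1)/2$ with each digit $(p-1)/2$) gives $\eta\equiv \nu_q(\lambda\text{-digits})/\nu_q(-d)$ up to the factor $(-1)^{f-1}$ coming from the $-$ signs. Since the elements of $\Mu_{p-1}$ are distinct mod $p$, this determines $\eta$, and we expect the factorial ratio to reduce, via Wilson-type identities $((p-1)/2)!^2\equiv(-1)^{(p+1)/2}$, to $(-1)^{f-1}$. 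Alternatively, we would use Gross--Koblitz \eqref{GammaIdent} with the reflection formula \eqref{reflec} and pair terms.

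Finally, $G(\chi^n)=G(\chi)$ for odd $n$ prime to $2m$ follows because $\chi^n$ has the same order and satisfies the same hypothesis. For odd $n$ not prime to $m$, the character $\chi^n$ has order $2m'$ with $m'\mid m$, and $2\in\lr{p}\subset U_{m'}$ still holds, so the same formula applies.
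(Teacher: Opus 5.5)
Both stages of your proposal have genuine gaps.

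\textbf{Stage 1: the digit sums.} Your route runs through showing $-1\in\lr{p}\subset U_m$ and pairing $\fpart{x}+\fpart{-x}=1$. But $2\in\lr{p}$ does not imply $-1\in\lr{p}$. The paper's own Example \ref{Ex100}(i) is a counterexample: $p=11$, $f=3$, $m=7$, $\chi=\omega^{95}$. There $\lr{11}=\{1,2,4\}$ in $U_7$, and the paper notes that $\chi$ and $\chi^{-1}$ are not Frobenius-conjugate. The orbit of $1$ in $U_{14}$ is $\{1,11,9\}$, which is not closed under negation; with $f$ odd it cannot be. Your map $x\mapsto x+m$ does not even preserve odd residues. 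Nevertheless $\frac{1}{14}+\frac{11}{14}+\frac{9}{14}=\frac32=\frac f2$.

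The identity is true, but it comes from doubling, which you had noticed. For odd $x \bmod 2m$, set $y=x\bmod m\in[1,m-1]$. Then $\fpart{x/(2m)}=y/(2m)+\frac12[y\text{ even}]$. Doubling $y\mapsto 2y\bmod m$ permutes the coset $k\lr{p}\subset U_m$. Comparing $\sum y$ with $\sum (2y\bmod m)$ gives $\sum y=m\cdot\#\{y>m/2\}$. Also $\#\{y\text{ even}\}=\#\{y<m/2\}$, since $2y\bmod m$ is even exactly when $y<m/2$. So each coset contributes half its size, and $s_q(kd)=(p-1)f/2$.

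\textbf{Stage 2: determining $\eta$.} This is the more serious gap: you never determine $\eta$. Since $\pm1\in\Mu_{p-1}\cap\Mu_{2m}$ always, you need the exact congruence $\nu_q(-d)\equiv((p-1)/2)!^f\pmod p$, not just up to sign. Wilson's theorem only controls $((p-1)/2)!^2$, and pairing digits $c_j$ with $p-1-c_j$ only controls $\nu_q(c)^2$. Your reflection-formula alternative again presupposes $-1\in\lr{p}$.

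The missing idea is a duplication identity that uses $2\in\lr{p}$ multiplicatively. Write $\chi=\psi\widehat{\lambda}$ with $\psi=\chi^{m+1}$ of order $m$ and $\widehat{\lambda}=\chi^m$. The Hasse--Davenport product formula for $n=2$ reads $G(\psi)G(\psi\widehat{\lambda})=\ov{\psi}(4)\,G(\psi^2)\,G(\widehat{\lambda})$. Choose $j$ with $p^j\equiv 2\pmod m$, so $\psi^2=\psi^{p^j}$. Then $G(\psi^2)=G(\psi)\neq 0$. Also $\psi(2)^2=\psi(2^{p^j})=\psi(2)$, since $2\in\F_p$, which forces $\psi(2)=1$. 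Hence $G(\chi)=G(\widehat{\lambda})=(-1)^{f-1}G(\lambda)^f$ by \eqref{HD}. This needs neither Proposition \ref{GoodpDigits} nor any digit computation.

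The paper itself only quotes the theorem from Evans. The tools it records for it, Gross--Koblitz \eqref{Gross-Koblitz} and the duplication formula \eqref{Dup}, are the $p$-adic form of this same argument. Your closing deduction for odd $n$ is fine once the main formula is in hand.
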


\begin{Rem}
If $\widehat{\lambda}  =  \lambda \circ N_{\F_q/\F_p}$ is the lift of $\lambda$ to $\F_q^\times$, then  $G(\chi) =  G(\widehat{\lambda})$ by Hasse-Davenport \eqref{HD}.
\end{Rem}

\begin{Rem}
The choice of $m$ determines the values of $f$ and the primes $p$ to which the Theorem applies.  Indeed, let $\lr{a}$ be any  cyclic subgroup of $U_m$ containing 2 and choose any odd prime $p \equiv a \pmod{m}$.  Then take $f$ to be any multiple of $o_m(a)$. 
\end{Rem}

\vspace{5 pt}

Using Theorem \ref{T:Gsum1} and the conventions of Gauss's evaluation for quadratic characters \cite[p.~362]{BEW}, the complex embedding of $G(\chi)$ is easily determined, as follows:

\begin{cor}
$
G(\chi) = \begin{cases}
\hspace{4 pt} (-1)^{f-1} \, q^{1/2} &\text{if }p\equiv 1 \pmod{4}, \vspace{2 pt} \\
(-1)^{f-1} \, i^{f} \,q^{1/2}  &\text{if }  p\equiv 3 \pmod{4}.
\end{cases}
$
\end{cor}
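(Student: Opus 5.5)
We need to prove the Corollary: the complex value of $G(\chi)$ follows from Theorem \ref{T:Gsum1}, $G(\chi) = (-1)^{f-1} G(\lambda)^f$, together with Gauss's sign determination of the quadratic Gauss sum over $\F_p$. The plan is to compute $G(\lambda)$ under the complex embedding sending $\zeta_p \mapsto e^{2\pi i/p}$, raise it to the $f$-th power, and simplify.

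First, with $\psi(a) = \zeta_p^{a}$ on $\F_p$ and $\zeta_p = e^{2\pi i/p}$, Gauss's classical theorem \cite[p.~362]{BEW} gives $G(\lambda) = \sqrt{p}$ if $p \equiv 1 \pmod 4$ and $G(\lambda) = i\sqrt{p}$ if $p \equiv 3 \pmod 4$, with positive real square roots. Here $G(\lambda) = \sum_{a \in \F_p^\times} \lambda(a)\zeta_p^a$ agrees with the classical sum, since the term $a=0$ contributes nothing when $\lambda$ is nontrivial. This sign determination is the only nontrivial input, and we simply quote it.

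Next, substitute into Theorem \ref{T:Gsum1}. If $p \equiv 1 \pmod 4$, then $G(\lambda)^f = p^{f/2} = q^{1/2}$. If $p \equiv 3 \pmod 4$, then $G(\lambda)^f = i^f p^{f/2} = i^f q^{1/2}$. Multiplying by $(-1)^{f-1}$ gives the two displayed formulas.

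The main point needing care is the choice of embedding: the formula depends on identifying $\zeta_p$ with $e^{2\pi i/p}$, which is implicit in the phrase ``the complex embedding.'' Under another embedding, $\zeta_p \mapsto e^{2\pi i\alpha/p}$, the factor $i$ is multiplied by $\lambda(\alpha)$. The paper's ``conventions of Gauss's evaluation'' fix this choice, so we state it explicitly. Note that $\Mu_{2m}$ plays no role, because $G(\chi)$ already lies in $\Q(\sqrt{p^*})$.
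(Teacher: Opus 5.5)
Your proposal is correct and matches the paper's argument: substitute Gauss's evaluation $G(\lambda)=\sqrt{p}$ or $i\sqrt{p}$, for $\zeta_p = e^{2\pi i/p}$ as in \cite[p.~362]{BEW}, into $G(\chi) = (-1)^{f-1}G(\lambda)^f$ from Theorem \ref{T:Gsum1}. One small imprecision in your side remark, which does not affect the proof: changing the embedding by $\zeta_p \mapsto \zeta_p^\alpha$ multiplies all of $G(\lambda)$ by $\lambda(\alpha)$ in both cases $p \equiv 1, 3 \pmod 4$, not only the factor $i$.
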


\vspace{5 pt}

We also quote the evaluation of certain Gauss sums by Stickelberger \cite[p.~364]{BEW},  allowing $p =2$ and any $M \ge 3$. 

\begin{prop}[Stickelberger] \label{StickelEval}
Suppose that $p$ has even order $f = 2n$ in $U_M$ and $p^n \equiv -1 \pmod{M}$.  Let $q=p^{f}$, let $\chi$ be a character of order $M$ on $\F_{q}^\times$ and write $p^n+1 = cM$.  Then
$$
G(\chi) = G(\chi^{-1}) = \begin{cases} (-1)^c \, p^n &\text{if } p \text{ is odd,} \\
                                                              2^n &\text{if } p = 2 \text{ and } n > 2.
                                        \end{cases}
$$ 
\end{prop}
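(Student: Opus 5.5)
We prove Stickelberger's evaluation in three steps: show that $G(\chi)$ is rational up to a root of unity, pin down its absolute value and $p$-adic valuation to get $\pm p^n$, and then fix the sign using the unit congruences of Lemma \ref{Stick} and Remark \ref{Nu2}.

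\textbf{Step 1: the digit sums.} Write $\chi = \omega^{-a}$ with $a = k(q-1)/M$ and $\gcd(k,M)=1$. Since $q-1 = (p^n-1)(p^n+1) = (p^n-1)cM$, we have $a = k c\,(p^n-1)$. Reduce $kc$ modulo $p^n+1$ to some $b$ with $0<b<p^n+1$; then $a \equiv b(p^n-1) \pmod{q-1}$. The $p$-adic digits of $b(p^n-1) = (b-1)p^n + (p^n-b)$ are the $n$ digits of $b-1$ followed by those of $p^n-b$. These two blocks are digitwise complementary except for a unit shift, and one checks that $s_q = (p-1)n$. The hypothesis $p^n\equiv -1 \pmod M$ gives $\chi^{p^n}=\chi^{-1}$, so $G(\chi)=G(\chi^{-1})$ immediately. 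The same digit computation applies to every $\sigma_u(\chi)$, so all conjugates have digit sum $(p-1)n = (p-1)f/2$.

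\textbf{Step 2: $G(\chi) = \epsilon\, p^n$ with $\epsilon$ a root of unity.} Apply Lemma \ref{Stick} at every prime $\sigma_u^{-1}(\gp)$: each gives $\ord(G(\chi)) = (p-1)n = \ord(p^n)$. Hence $\epsilon = G(\chi)/p^n$ is a unit in $\cO_L$. By \eqref{ConjChi}, $|\epsilon|_v=1$ at every archimedean place, so $\epsilon$ is a root of unity.

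To pin $\epsilon$ further, compute the $\gamma$-action from \eqref{DpOnGaussSum}: $\gamma G(\chi) = \chi(\alpha^{-1})G(\chi)$. Since $\alpha\in\F_p^\times$, we have $\chi(\alpha) = \omega(\alpha)^{-a}$, and $a = kc(p^n-1)$ is divisible by $p-1$, so $\chi(\alpha)=1$. Thus $\epsilon\in\Q(\Mu_M)$. Frobenius fixes $G(\chi)$, so $\epsilon^p=\epsilon$, giving $\epsilon\in \Mu_{p-1}$ for odd $p$. For $p=2$, $\epsilon \in \Mu_{2M}$ is fixed by squaring on the odd part, so $\epsilon = \pm1$. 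Alternatively, Proposition \ref{GoodpDigits} applies directly for odd $p$, since all digit sums equal $(p-1)f/2$.

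\textbf{Step 3: the sign (main obstacle).} For odd $p$, Lemma \ref{Stick} gives $G(\chi)/(\zeta_p-1)^{(p-1)n} \equiv -1/\nu_q(a) \pmod{\gp}$. We need two ingredients.

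First, $(\zeta_p-1)^{p-1} \equiv -p \pmod{\text{higher order}}$. More precisely, $(\zeta_p-1)^{p-1}/p \equiv -1 \pmod{\zeta_p-1}$, because $p = \prod(1-\zeta_p^i)$ and $(1-\zeta_p^i)/(1-\zeta_p)\equiv i$, with $(p-1)!\equiv -1$ by Wilson's theorem.

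Second, $\nu_q(a)$ is computed from the digits of $b-1$ and $p^n-b$. For a digit $d$, Wilson's theorem gives $d!\,(p-1-d)! \equiv (-1)^{d+1} \pmod p$. Pairing each digit of $b-1$ with the complementary digit of $p^n-b$ (after absorbing the unit shift), the product reduces to $(-1)^{n+s}$ with $s = s(b-1)$. Because $b \equiv kc \pmod{p-1}$ roughly, the exponent comes out to $(-1)^c$.

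Combining these, $\epsilon \equiv (-1)^c \pmod{\gp}$. Since the elements of $\Mu_{p-1}$ are distinct mod $p$, $\epsilon = (-1)^c$. The delicate bookkeeping is the parity of $c$ relative to $b$ and the carries in $p^n-b$, and I expect that to be where the real work lies.

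For $p=2$ and $n>2$, use Remark \ref{Nu2} (which needs $f\ge3$) in the same way. Here $G/(-2)^{n} \equiv (-1)^{\beta_q(a)+1} \pmod{\gp^2}$, and we must show that this forces $G = 2^n$. That amounts to computing $\beta_q$ for the complementary digit pattern and showing it has the right parity.
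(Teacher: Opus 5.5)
Your strategy is sound and differs from the classical proof. As written, though, it does not prove the proposition. It stops before the sign, which is the only thing the statement asserts beyond $G(\chi)=\epsilon\,p^n$. The one concrete reduction you offer for the sign, "$b\equiv kc \pmod{p-1}$", is false in general: $b$ is determined only modulo $p^n+1$, and $\gcd(p^n+1,p-1)=2$. (A minor point in Step 1: there is no unit shift. Since $(b-1)+(p^n-b)=p^n-1$, the two blocks are exactly complementary digit by digit.)

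Parity is all you need, and the argument closes as follows. For odd $p$, your two ingredients give $\epsilon\equiv -(-1)^n\,\nu_q(a)^{-1}\pmod{\gp}$ and $\nu_q(a)\equiv(-1)^{n+s(b-1)}$. Hence $\epsilon\equiv(-1)^{s(b-1)+1}=(-1)^b$, because $s(b-1)\equiv b-1\pmod{p-1}$ and $p-1$ is even. Since $p^n+1$ is even, $b\equiv kc\pmod 2$. The missing observation is this: if $c$ is odd, then $M=(p^n+1)/c$ is even, so $k$, being prime to $M$, is odd. Thus $(-1)^{kc}=(-1)^c$ in every case, and $\epsilon=(-1)^c$.

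For $p=2$, write $b-1=\sum_{i<n}x_i2^i$. The binary digits of $a$ are $a_i=1-x_i$ and $a_{n+i}=x_i$ for $0\le i<n$, whence
\[
\beta_q(a)\equiv\sum_{i=0}^{n-2}(1+x_i+x_{i+1})+(x_0+x_{n-1})\equiv n-1 \pmod 2 .
\]
Remark \ref{Nu2} then gives $G(\chi)/(-2)^n\equiv(-1)^n$, i.e.\ $\epsilon\equiv 1\pmod{\gp^2}$. Since $\epsilon=\pm1$, this forces $\epsilon=1$.

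The paper itself gives no proof; it quotes the result from \cite{BEW}. The standard argument is elementary. Because $M\mid p^n+1$, $\chi$ is trivial on $\F_{p^n}^\times$. Group $\F_q^\times$ into cosets $y\F_{p^n}^\times$ and use transitivity of the trace. Each coset contributes $-\chi(y)$, except the coset $\{y: y^{p^n-1}=-1\}$ of relative-trace-zero elements, which contributes $(p^n-1)\chi(y_0)$. Since $\chi$ sums to $0$ over the cosets, $G(\chi)=p^n\chi(y_0)$. Taking $y_0=g^{(p^n+1)/2}$ for a generator $g$ gives $\chi(y_0)=(-1)^c$, and $\chi(y_0)=1$ when $p=2$.

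That route is shorter, self-contained, and needs no restriction on $n$ when $p=2$. Yours reads the answer off Stickelberger's congruence and, for $p=2$, the Gross--Koblitz-based Remark \ref{Nu2}, so it needs $f\ge 3$. It is heavier, but it fits the paper's digit-sum framework and essentially reproves Proposition \ref{GoodpDigits} along the way. It is also not circular, since neither Lemma \ref{Stick} nor Remark \ref{Nu2} depends on this proposition.
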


Theorem \ref{T:Gsum1}  and Proposition \ref{StickelEval} give non-trivial Gauss sum equalities such that the digital sum is $f(p-1)/2$ and $G(\chi)$ is in $\mathbb{Q}$ or a quadratic field. 

\begin{Ex} \label{Ex400}
Assume that $f$ is even, $q = p^f$ and $d = p^{f/2}-1$.  Then $\chi = \omega^d$ is a character of order $M = p^{f/2}+1$ on $\F_q^\times$ and $G(\chi) = G(\chi^u)$ is rational for all $u \in U_M$ by Proposition \ref{StickelEval}.  As soon as there is a choice of $u \notin \lr{p}$, namely if $q  \notin \{2^2, 3^2\}$, the characters $\chi$ and $\chi^u$ are not Frobenius-conjugate.  
\end{Ex}

However, neither condition in Example \ref{Ex400} is necessary.

\begin{Ex} \label{Ex500}
Let $\chi  = \omega^d$.  The generator $\gamma$ of $\Gal(\Q(\Mu_{p})/\Q)$ in \eqref{DpOnGaussSum} acts on $G(\chi)$ by $\gamma(G(\chi)) = \chi(\alpha^{-1}) \, G(\chi)$, where $\alpha$ is a primitive root modulo $p$, i.e. the multiplicative order $o_p(\alpha)  = p-1$.  Hence the ramification index of $p$ in $\Q(G(\chi))/\Q$ is given by $e = (p-1)/(d,p-1)$ and so an extension of degree at least $e$ is required to obtain $G(\chi)$.  In the following examples, Proposition \ref{EqualG} can be used to verify that if $\chi' = \omega^{d'}$, then $G(\chi) = G(\chi')$.  We have $s_q(d) = s_q(d') \ne f(p-1)/2$ and $e > 2$.
$$
\begin{array}{| c | c | c | c | c | c | c |}
\hline
p & f & d &  d' & s_q(d) = s_q(d') & \nu_q(d) = \nu_q(d')  &   e \\
\hline
19 & 2 & 12 & 156 & 12 & 17 & 3 \\ 
\hline
41 & 2 & 28 & 308 &  28 & 35 &10 \\
\hline
\end{array}
$$
\end{Ex}

\vfill

\end{document}